\documentclass[11pt]{article}

\usepackage{hyperref}
\usepackage{multicol}
\usepackage{amsmath,amsthm,amscd}
\usepackage{psfrag}
\usepackage{graphicx}
\normalfont\upshape

\usepackage{a4wide}
\newcommand{\pd}{\partial} 

 %%% MATH COMMANDS %%%%%%%%%%%%%%%%%%%%%%%%%%%%%%%%%%%%
\usepackage{bbm}
\def\C{{\mathbbm C}}

\def\R{{\mathbbm R}}
\def\Z{{\mathbbm Z}}

\def\sltwo{\mathfrak{sl}_2}
\def\osponetwo{\mathfrak{osp}_{1|2}}

\def\sym{\Lambda}

\def\nsym{N\sym}
\newcommand{\xt}{\widetilde{x}}
\newcommand{\et}{\widetilde{e}}

\newcommand{\undj}{\underline{\mathrm{J}}}
\newcommand{\symt}{\widetilde{\Lambda}}

\usepackage[left=.8in, right=.8in, top=.8in, bottom=.8in]{geometry}

\theoremstyle{definition}
\newtheorem{thm}{Theorem}[section]
\newtheorem{cor}[thm]{Corollary}

\newtheorem{lem}[thm]{Lemma}
\newtheorem{rem}[thm]{Remark}
\newtheorem{defn}[thm]{Definition}
\newtheorem{example}[thm]{Example}
\newtheorem{question}[thm]{Question}

\usepackage[vcentermath,enableskew]{youngtab}

% Equation numbering:
\numberwithin{equation}{section}
\usepackage{latexsym,amssymb}

% Headers and Footers

\makeatletter
\renewcommand*{\ps@plain}{%
  \let\@mkboth\@gobbletwo
  \def\@oddhead{%
    \reset@font
    \hfil
   \markright{\protect\parbox[b]{2cm}{foo\\bar}}
    % \hfil % removed for aligning to the right
  }%
  \def\@oddfoot{%
    \reset@font
    %\hfil

    % \hfil % removed for aligning to the right
  }
  \let\@evenhead\@oddhead
  \let\@evenfoot\@oddfoot
}
\makeatother

\usepackage{fancyheadings}
	\pagestyle{fancy}
	\rhead{}
	\lhead{}
	\cfoot{}
	\rfoot{\thepage}

\usepackage{tikz}
\usetikzlibrary{calc,shadings}
\usepackage{pgfplots}

% argument #1: any options
\newenvironment{customlegend}[1][]{%
    \begingroup
    % inits/clears the lists (which might be populated from previous
    % axes):
    \csname pgfplots@init@cleared@structures\endcsname
    \pgfplotsset{#1}%
}{%
    % draws the legend:
    \csname pgfplots@createlegend\endcsname
    \endgroup
}%

% makes \addlegendimage available (typically only available within an
% axis environment):
\def\addlegendimage{\csname pgfplots@addlegendimage\endcsname}

%%--------------------------------

% definition to insert numbers
\pgfkeys{/pgfplots/number in legend/.style={%
        /pgfplots/legend image code/.code={%
            \node at (0.295,-0.0225){#1};
        },%
    },
}
\usetikzlibrary{decorations.pathmorphing}
\usepackage{verbatim}

%===================================================================================================================

\begin{document}

%\vspace{.9cm}

\begin{center}

%\vspace{.9cm}
{\huge \underline{Odd Dunkl Operators and nilHecke Algebras} \par}
\vspace{.5cm}
{\large {Ritesh Ragavender} \par}
{\large {July 14, 2014} \par}
\end{center}

\vspace{-.9cm}
\section*{Abstract}

Symmetric functions appear in many areas of mathematics and physics, including enumerative combinatorics, the representation theory of symmetric groups, statistical mechanics, and the quantum statistics of ideal gases. In the commutative (or ``even'') case of these symmetric functions, Kostant and Kumar introduced a nilHecke algebra that categorifies the quantum group $U_q(\mathfrak{sl}_2)$. This categorification helps to better understand Khovanov homology, which has important applications in studying knot polynomials and gauge theory. Recently, Ellis and Khovanov initiated the program of ``oddification'' as an effort to create a representation theoretic understanding of a new ``odd'' Khovanov homology, which often yields more powerful results than regular Khovanov homology. In this paper, we contribute towards the project of oddification by studying the odd Dunkl operators of Khongsap and Wang in the setting of the odd nilHecke algebra. Specifically, we show that odd divided difference operators can be used to construct odd Dunkl operators, which we use to give a representation of $\mathfrak{sl}_2$ on the algebra of skew polynomials and evaluate the odd Dunkl Laplacian. We then investigate $q$-analogs of divided difference operators to introduce new algebras that are similar to the even and odd nilHecke algebras and act on $q$-symmetric polynomials. We describe such algebras for all previously unstudied values of $q$. We conclude by generalizing a diagrammatic method and developing the novel method of insertion in order to study $q$-symmetric polynomials from the perspective of bialgebras. 

\vspace{-.5cm}
\tableofcontents

\section{Introduction}
\subsection{The Commutative (``Even'') Case} \label{intro1}

\emph{Symmetric polynomials} are polynomials in $n$ independent, \emph{commutative} variables $x_1,x_2,\ldots,x_n$ that are invariant under the action of any permutation acting on the indices. They arise in enumerative combinatorics, algebraic combinatorics, Galois theory, quantum statistics, and the quantum mechanics of identical particles \cite{physics,stanley}. The \emph{even nilHecke algebra} $NH_n$, introduced by Kostant and Kumar in \cite{kk}, is important in studying these symmetric polynomials. $NH_n$ is graded Morita equivalent to the symmetric polynomials in $n$ variables, and is generated by $n$ commuting variables $x_1,\ldots,x_n$ and $n$ divided difference operators $\pd_i=(x_i-x_{i+1})^{-1}(1-s_i)$, for $1 \leq i \leq n$. Here, $s_i$ is the simple transposition in the symmetric group that swaps $x_i$ and $x_{i+1}$.

Combining these divided differences with partial derivatives, one obtains a commuting family of operators originally introduced by Dunkl \cite{dunkl}.  These \emph{Dunkl operators}, denoted $\eta_i$, have a major role in mathematical physics and conformal field theory. In particular, they relate to the study of quantum many-body problems in the Calogero-Moser-Sutherland model, which describes integrable systems of one dimension \cite{etingof,rosler}. Dunkl operators can also be used to define three operators, which arise in physics and harmonic analysis, that satisfy the defining relations of the Lie algebra $\sltwo$ \cite{heckman}. These three operators, found by Heckman and called an \emph{$\sltwo$-triple}, play a crucial role in studying Fischer decomposition, which has importance not only in representation theory but also in the algebraic Dirichlet problem \cite {dirichlet,db,fischer}.

The \emph{Cherednik operators}, denoted by $Y_i$, are defined in terms of Dunkl operators and have important applications in representation theory \cite{bf, knop}. They have non-degenerate simultaneous eigenfunctions, known as \emph{Jack polynomials}. These polynomials are a specific case of the well-known Macdonald polynomials and contribute to representation theory, statistical mechanics, and the study of the quantum fractional Hall problem, important in condensed matter physics \cite{hall,knop}. 

The diagram below depicts the relationship between the three operators discussed so far.  
\vspace{.5cm}

{\centering
\begin{tikzpicture}[scale=1, every node/.style={transform shape}]
 \draw (-2.1,0.5) [black,thick] rectangle (11.3,-4.3);
  \node (D) {Divided Differences $\pd_i$};
 
  \node (CAT) [below of=D, node distance=1.2cm]{nilHecke Algebra};
  \node (hI) [below of=D, node distance=1.65cm]{Schubert polynomials};
  \node (bye) [below of=D, node distance=2.1cm]{Cohomology};

  \node (N) [right of=D,node distance=4.5 cm]{Dunkl Operators $\eta_i$};
  \node (S) [below of=N, node distance=1.2 cm]{$\sltwo$ triple};
  \node (C) [right of=N, node distance=4.5 cm]{Cherednik Operators $Y_i$};
  \node (J) [below of=C, node distance=1.2 cm]{Jack Polynomials };
  \node (CA) [below of=C, node distance=1.68 cm]{Cherednik Algebras};
  \node (AH) [below of=C, node distance=2.1 cm]{Affine Hecke Algebras};
  
 % \node (J2)[below of=J, node distance=0.55 cm]{as eigenfunctions};
  \node(HA) [below of=S, node distance=0.45 cm] {(harmonic analysis)};
  \node(HA) [below of=S, node distance=0.9 cm] {quantum CMS Model};
  
  \draw[blue,very thick][->] (D) to (N);
  \draw[orange, very thick][->] (D) to (CAT);
  \draw[orange,very thick][->] (N) to (S);
  \draw[blue,very thick][->] (N) to (C);
  \draw[orange, very thick][->] (C) to (J);

\begin{customlegend}[
legend entries={ % <= in the following there are the entries
\normalsize{``can be used to''},
\normalsize{``used to define''}
},
legend style={at={(6.45,-2.65)},font=\footnotesize}] % <= to define position and font legend
% the following are the "images" and numbers in the legend
    \addlegendimage{->,orange, very thick}
    \addlegendimage{->,blue, very thick}
\end{customlegend}
\node at (4.7,-4.8) {Figure 1: Operators in the study of symmetric polynomials.};
 \end{tikzpicture}
 \par}

\subsection{The ``Odd'' Case}
The divided difference operators, Dunkl operators, and Cherednik operators all study the \emph{commutative} symmetric functions. Ellis and Khovanov, however, sought to study different kinds of symmetric functions. They recently introduced the \emph{quantum} case of symmetric functions, where $x_jx_i=qx_ix_j$ for $j>i$ \cite{ek}. In the ``odd'' case $q=-1$, they describe the ``odd symmetric polynomials'', which are polynomials in the $n$ variables $x_1,\ldots,x_n$ where $x_ix_j+x_jx_i=0$ for $i \neq j$. This type of noncommutativity arises in the study of exterior algebras and parastatistics. 

The motivation for considering these odd symmetric polynomials and their corresponding odd nilHecke algebra involves the categorification of quantum groups. \emph{Categorification}, introduced by Crane and Frenkel, is generally the process of replacing algebras and representations by categories and higher categories in order to make quantum 3-manifold invariants into 4-manifold invariants \cite{cf}. In physics, categorification corresponds to increasing dimensions, which allows one to understand symmetries in lower dimensions and then use categorification to better understand higher dimensions. In mathematics, categorified quantum groups give a higher representation theoretic construction of \emph{link homologies}, which in turn categorify quantum link polynomials. 

The original example of link homology is \emph{Khovanov homology}, a bigraded abelian group which categorifies the well-known Jones polynomial. It has major applications in studying knot polynomials, quantum field theory, and classical gauge theory \cite{khovanov, gauge}. Since the quantum group $U_q(\sltwo)$ plays a role in understanding the Jones polynomial, a categorification of $U_q(\sltwo)$ would be useful in better understanding Khovanov homology. This precise categorification is achieved through the ``even'' nilHecke algebra $NH_n$ described in subsection \ref{intro1}.

Recently, Ozsv\'ath, Rasmussen, and Szab\'o found an \emph{odd} analog of Khovanov homology \cite{ors}. Their odd Khovanov homology also categorifies the Jones polynomial, and agrees modulo 2 with Khovanov homology. However, both theories can detect knots that the other theory cannot \cite{okh}. The subject of odd Khovanov homology has yet to be fully understood, despite its crucial connections with Khovanov homology.

Knowing this, Ellis, Lauda, and Khovanov developed the odd nilHecke algebra to provide an \emph{odd} categorification of $U_q(\sltwo)$ and give a construction of odd Khovanov homology from a representation theoretic standpoint \cite{ekl,el}. In addition to being useful in the categorification of quantum groups, the odd nilHecke algebra is also related to Hecke-Clifford superalgebras \cite{kw2,kw3} and has been used to construct odd analogs of the cohomology groups of Springer varieties \cite{lr}. 

The below diagram summarizes the categorifications that motivate the present work. NH stands for nilHecke, Cat. stands for categorification, and KH stands for Khovanov homology.
\\[11pt]
{\centering
\begin{tikzpicture}
\draw (-3.3,3.5) [black,thick] rectangle (10.3,-2.3);
\node (Jones) at (3,0){Jones Polynomial};
\node (sl21) at (7,0) {$U_q(\sltwo)$};
\node(sl22) at (-1,0) {$U_q(\sltwo)$};
\draw [->,red,very thick] (sl21) to (Jones);
\draw [->,red,very thick] (sl22) to (Jones);
\node (KH) at (1.8,3) {KH};
\node (OKH) at (4.2,3) {Odd KH};
\draw[<->,green,very thick] (KH) to (OKH);
\node(C) at (-1,3) {Cat. of $U_q(\sltwo)$};
\node(OC) at (8,3) {Odd Cat. of $U_q(\sltwo)$};
\draw[->,red,very thick] (C) to (KH);
\draw[->,red,very thick] (OC) to (OKH);
\node(odd NH) at (8.6,1.5) {odd NH algebra};
\node(odd NH) at (-2.15,1.5) {NH algebra};

\draw [->, orange,very thick,
line join=round,
decorate, decoration={
    zigzag,
    segment length=4,
    amplitude=.9,post=lineto,
    post length=2pt
}]  (-1,2.75) -- (-1,.25);

\draw [->, orange,very thick,
line join=round,
decorate, decoration={
    zigzag,
    segment length=4,
    amplitude=.9,post=lineto,
    post length=2pt
}]  (7,2.75) -- (7,.25);

\draw [->, orange,very thick,
line join=round,
decorate, decoration={
    zigzag,
    segment length=4,
    amplitude=.9,post=lineto,
    post length=2pt
}]  (1.8,2.75) -- (2.95,.25);
\draw [->, orange,very thick,
line join=round,
decorate, decoration={
    zigzag,
    segment length=4,
    amplitude=.9,post=lineto,
    post length=2pt
}]  (4.2,2.75) -- (3.05,.25);

\begin{customlegend}[
legend entries={ % <= in the following there are the entries
\normalsize{``categorifies''},
\normalsize{``helps to explain''},
\normalsize{``equivalent modulo 2''}
},
legend style={at={(6,-0.5)},font=\footnotesize}] % <= to define position and font legend
% the following are the "images" and numbers in the legend
    \addlegendimage{->,orange,very thick, line join=round,decorate, decoration={ 
    zigzag,
    segment length=4,
    amplitude=.9,post=lineto,
    post length=2pt}}
    \addlegendimage{->,red, very thick}
    \addlegendimage{<->,green,very thick}
    
\end{customlegend}
\node at (3.5,-2.8) {Figure 2: Odd Khovanov Homology and Categorification};
\end{tikzpicture}
\par}

\subsection{Outline of the Present Paper}
The main goal of the present paper is to make progress towards giving a representation theoretic construction of odd Khovanov homology. Despite being relatively new, odd Khovanov homology seems to have great importance in knot theory. It has connections to Heegaard-Floer homology and yields \emph{stronger} results than Khovanov homology in bounding the Thurston-Bennequin number and detecting quasi-alternating knots \cite{okh}. Odd Khovanov homology is also related to signed hyperplane arrangements, which have many implications in graph theory and topology \cite{hyper}.

We study odd Khovanov homology by looking for new representation theoretic structures that arise from identifying ``odd'' analogs of structures that play important algebraic roles in the even case. Ellis, Khovanov, and Lauda started this program of ``oddification'' by finding an odd analog of $NH_n$, and using it to categorify $U_q(\sltwo)$. Searching for the geometry underlying these odd constructions would also provide a very new approach to noncommutative geometry. For example, we study potential generators of certain Cherednik algebras. Since spherical rational Cherednik algebras fit nicely into a family of algebras from the geometry of symplectic resolutions, including Webster's tensor product algebras and cyclotomic KLR algebras, this project may be used in the context of ``odd'', noncommutative geometry. 
%The new structures that oddification has created, such as the odd symmetric functions, are combinatorially interesting in their own right. We also believe that s
\begin{comment}
\begin{center}
    \begin{tabular}{ | p{4.8cm} | p{4.5cm} | }
    \hline
    Even Structure & Odd version found? \\ \hline
    Symmetric functions & Yes    \\ \hline
    nilHecke Algebra $NH_n$ &  Yes \\ \hline
    Dunkl operators & Yes  \\ \hline
    $\sltwo$-triple using Dunkl operators & Found in the present work   \\ \hline
    Expression for the Dunkl Laplacian using the classical Yang-Baxter equation & Found in the present work \\ \hline
    Cherednik Operators and Jack Polynomials & We propose a definition for odd Cherednik operators and discuss the necessary work for obtaining odd Jack polynomials.  \\
    \hline 
    \end{tabular}
\end{center}
\end{comment}

In Subsection \ref{intro1}, we discussed the (even) divided difference operators, Dunkl operators, and Cherednik operators. Although analogs of Dunkl operators have been found in the odd case, they have not been well-studied. Odd Cherednik operators have not even been defined.

As a result, the first goal of the present paper is to unify certain results in the odd case and to further study the odd Dunkl operators of Khongsap and Wang. In Section \ref{odd dunkl}, we introduce an operator $r_{i,k}$ related to the generalized odd divided difference operator $\pd_{i,k}^{\text{odd}}$ and study its properties. One of our main results (Equation \ref{etaexplicit}) is that the odd Dunkl operator $\eta_i$ may be expressed in terms of the odd divided difference operators of Ellis, Khovanov and Lauda:
\begin{equation} 
\eta_i^{\text{odd}}=t\delta_i+u \sum_{k \neq i} \pd_{i,k}^{\text{odd}}s_{i,k}.
\end{equation}
This result connects odd Dunkl operators and the odd nilHecke algebra, both of which play important roles in the project of oddification. 

In the even case, one can introduce an operator known as the \emph{Dunkl Laplacian}, given by $\sum_{i=1}^n \eta_i^2$. This operator has important applications in spherical harmonics and heat semigroups \cite{rosler}. Our next goal in the present paper is to express the Dunkl Laplacian in the odd case. In Section \ref{cybe section}, we show that the $r_{i,k}$ satisfy the classical Yang-Baxter equation, and use this result to evaluate the \emph{odd} Dunkl Laplacian. Specifically, we show that
\begin{equation*}
\sum_{i=1}^n \eta_i^2=t^2\sum_{1 \leq i  \leq n}x_i^{-2}(1-\tau_i).
\end{equation*}

In Section \ref{sl2}, we find an odd analog of Heckman's important $\sltwo$-triple in \cite{heckman} by showing that a variant $D_i$ of the odd Dunkl operator can be used to construct three operators $r^2$, $E$, and $\Delta$ that satisfy the defining relations of the Lie algebra $\sltwo$:
\begin{equation*}
r^2=(2t)^{-1}\sum_{i=1}^n x_i^2 
\end{equation*}
\begin{equation*}
E=\sum_{i=1}^n x_ip_i+\frac{n}{2}+\frac{u}{t}\sum_{k \neq i}s_{i,k}
\end{equation*}
\begin{equation*}
\Delta=-(2t)^{-1}\sum_{i=1}^n D_i^2.
\end{equation*}

Since even Dunkl operators play an important role in the representation theory of symmetric groups, our study of odd Dunkl operators should result in a better understanding of the representation theory of odd symmetric functions, which correspondingly results in a better understanding of odd Khovanov homology. 
%It may also be of use in finding odd analogues of Jack and Macdonald polynomials. 
%Our study of odd Dunkl operators may also be of use in determining odd analogues of Jack and Macdonald polynomials. 

The second goal of this paper is to study a generalization of the odd symmetric functions known as $q$-symmetric functions, for which $x_j x_i = q x_i x_j$ when $j>i$. Previous authors have described a nilHecke algebra structure only for the odd case $q=-1$ and the even case $q=1$ \cite{ekl,kk}. In Section \ref{nh}, we find a \emph{$q$-divided difference operator} for \emph{all} previously unstudied values of $q$, and explore its properties. We show, for example, that twisted elementary symmetric polynomials are in the kernel of $q$-divided difference operators, just as odd elementary symmetric functions are in the kernel of odd divided difference operators. We then use $q$-divided difference operators to construct algebras acting on $q$-symmetric polynomials that have many similarities to the even and odd nilHecke algebras. We call these \emph{$q$-nilHecke algebras}. These algebras are nontrivial generalizations of the even and odd nilHecke algebras because the $q$-twist map introduced in Section \ref{nh} is not its own inverse when $q^2 \neq 1$. In Section \ref{diagrams}, we present the elementary $q$-symmetric polynomials using a generalization of a clever diagrammatic method arising in the context of bialgebras. We use these diagrams to study relations between elementary $q$-symmetric polynomials when $q$ is a root of unity. These methods can be combined with the algebras of Section \ref{nh} in order to continue studying $q$-symmetric polynomials, including $q$-Schur and $q$-monomial functions. 

In the conclusion, we also define the odd Cherednik operators and outline a procedure for finding and studying odd analogs of Jack polynomials. This makes progress towards answering a question of Ellis about the existence of Macdonald-like polynomials in the odd case. Since the Jack polynomials have importance in representation theory, their study would enhance our knowledge about the odd algebraic theory. 

\section{Odd Dunkl operators and the Odd nilHecke algebra} \label{odd dunkl}

\subsection{Preliminaries: Even Dunkl Operators}\label{background}
In the even case, we work with the ring $\C[x_1,\ldots,x_n]$ and a root system of type $A_n$, where $x_i x_j= x_j x_i$ for all $1 \leq i,j \leq n$ and $\alpha \in \C$. We first introduce some notation involving the symmetric group.
\begin{enumerate}
\item Let $s_{i,k}$ be the simple transposition in $S_n$ swapping $x_i$ and $x_{k}$. We let $s_i=s_{i,i+1}$. 
\item Let $s_{i,j}(k,\ell)$ be the result of applying $s_{i,j}$ to the pair $(k,\ell)$. Similarly define $s_{i,j}(k)$.
\end{enumerate} 
In \cite{dunkl}, Dunkl introduced the remarkable operator
\begin{equation*}
\eta_i^{\text{even}}=\frac{\pd}{\pd x_i}+\alpha\sum_{k \neq i}\pd_{i,k}^{\text{even}},
\end{equation*}
where $\frac{\pd}{\pd x_i}$ is the partial derivative with respect to $x_i$ and $\pd_{i,k}^{\text{even}}$ is the even divided difference operator: 
\begin{equation*}
\pd_{i,k}^{\text{even}}=(x_i-x_k)^{-1}(1-s_{i,k}).
\end{equation*}
Since $x_i-x_k$ always divides $f-s_{i,k}(f)$ for $f \in \C[x_1,\ldots,x_n]$, $\pd_{i,k}$ sends polynomials to polynomials. 

These Dunkl operators have various important properties, one of which is that they commute ($\eta_i \eta_j=\eta_j\eta_i$). In \cite{kw}, Khongsap and Wang introduced anti-commuting \emph{odd} Dunkl operators on skew polynomials. In Section \ref{odd dunkl}, we will develop the connection between these operators and the odd nilHecke algebra introduced in \cite{ekl}.

Returning to the even case, introduce operators $r^2$, $E$ (the Euler operator) and $\Delta_k$: 
\begin{align*}
&r^2=\frac{1}{2}\sum_{i=1}^n x_i^2 \\
&E=\sum_{i=1}^n x_i \frac{\pd}{\pd x_i}+\frac{\mu}{2} \\
&\Delta=\frac{1}{2}\sum_{i=1}^n \eta_i^2 ,
\end{align*}
where $\mu$ is the \emph{Dunkl dimension}, which is defined by the relation $\Delta|x|^2=2\mu$ as in \cite{db}.

Let $[p,q]=pq-qp$ be the commutator. Heckman showed that $r^2$, $E$, and $\Delta_k$ satisfy the defining relations of the Lie algebra $\sltwo$ \cite{heckman}:
\begin{align*}
&[E,r^2]=2r^2 \\
&[E,\Delta_k]=-2\Delta_k \\
&[r^2,\Delta_k]=E.
\end{align*}

\begin{rem}
If one were to replace $\Delta_k$ with the classical Laplacian on flat $\R^n$ (replacing the Dunkl operator with the partial derivative), these three operators still satisfy the $\sltwo$ relations. 
\end{rem}
\begin{rem}
From now on, we will use $\eta_i$ to denote the \emph{odd} Dunkl operator of Khongsap and Wang, defined in equation \ref{kw form}.
\end{rem}
In Section \ref{sl2}, we will focus on finding analogous results in the odd case.

\subsection{Introduction to the Odd nilHecke Algebra}
We will now discuss operators with the algebra P$^-=\C\langle x_1,\ldots,x_n\rangle/\langle x_jx_i+x_ix_j=0 \text{ for } i\neq j\rangle$. We call P$^-$ the \emph{skew polynomial ring}. We can define linear operators, called the odd divided difference operators, as below: 
\begin{defn} For $i=1,\ldots.n-1$, the $i$-th \emph{odd divided difference operator} $\pd_i$ is the linear operator P$^-\to \text{ }$P$^-$ defined by $\pd_i(x_i)=1$, $\pd_i(x_{i+1})=1$, $\pd_i(x_j)=0$ for $j \neq i,i+1$, and 
\begin{equation*}
\pd_i(fg)=\pd_i(f)g+(-1)^{|f|}s_i(f)\pd_i(g),
\end{equation*}
for all functions $f,g \in$ P$^-$. We call this last relation the \emph{Leibniz rule}.
\end{defn}
It is shown in \cite{ekl} that the odd divided difference operators can be used to construct an odd nilHecke algebra, generated by $x_i$ and $\pd_i$ for $1 \leq i \leq n$, subject to the following relations:
\begin{multicols}{2}
\begin{enumerate}
    \item $\pd_i^2=0$
    \item $\pd_i\pd_j+\pd_j\pd_i=0 \text{ for } |i-j| \geq 2$
    \item $\pd_i\pd_{i+1}\pd_i=\pd_{i+1}\pd_i\pd_{i+1}$
    \item $x_ix_j+x_jx_i=0 \text{ for } i\neq j$
    \item $x_i\pd_i+\pd_ix_{i+1}=1, \pd_ix_i+x_{i+1}\pd_i=1$
    \item $x_i\pd_j+\pd_jx_i=0 \text{ for } i \neq j,j+1$.
\end{enumerate}
\end{multicols}

Due to \cite{kko}, we have the following explicit definition of the odd divided difference operator:
\begin{equation} \label{oddform}
\pd_i(f)=(x_{i+1}^2-x_i^2)^{-1}[(x_{i+1}-x_i)f-(-1)^{|f|}s_i(f)(x_{i+1}-x_i)].
\end{equation}
Although this formula \emph{a priori} involves denominators, it does take skew polynomials to skew polynomials. We extend this definition to non-consecutive indices by replacing $i+1$ with any index $k \neq i$, for $1 \leq k \leq n$, and by replacing $s_i$ with $s_{i,k}$. Equation \ref{oddform} then becomes
\begin{equation} \label{kkoform}
\pd_{i,k}(f)=(x_{k}^2-x_i^2)^{-1}[(x_{k}-x_i)f-(-1)^{|f|}s_{i,k}(f)(x_{k}-x_i)].
\end{equation}
This extended odd divided difference operator satisfies the Leibniz rule $\pd_{i,k}(fg)=\pd_{i,k}(f)g+(-1)^{|f|}s_{i,k}\pd_{i,k}(g)$ \cite{ekl}. 

\subsection{Some Operations on Skew Polynomials} \label{ri stuff}
First, we introduce a common operator in the study of Dunkl operators:
\begin{defn} Let the \emph{$(-1)$-shift operator} $\tau_i$ be the automorphism of $P^-$ which sends $x_i$ to $-x_i$ and $x_j$ to $x_j$ for $j \neq i$. \end{defn}
Suppose $1 \leq i \neq j \leq n$ and $1 \leq k \neq \ell \leq n$, where $f$ is an element in $\C\langle x_1,\ldots,x_n\rangle / \langle x_ix_j+x_jx_i=0 \text{ for } i\neq j \rangle$. Then one has
\begin{align*}
&s_{i,j}\tau_{k,\ell}=\tau_{s_{i,j}(k,\ell)} s_{i,j}\\
&fx_i=(-1)^{|f|}x_i\tau_i(f). 
\end{align*}

\begin{rem}
Since skew polynomials are not super-commutative, we cannot say that $fg = (-1)^{|f||g|} gf$.  But the operator $\tau_i$ allows us to track the discrepancy from super-commutativity, since $x_i f = (-1)^{|f|} \tau_i(f) x_i$, making it useful in this context.
\end{rem}
We now introduce the operator $r_{i,k}=\pd_{i,k}s_{i,k}$ for $k \neq i$, which will serve as another odd divided difference operator that we will use to study odd Dunkl operators. For simplicity, let $r_i=r_{i,i+1}$. In the following lemma, we study the action of the transposition and $(-1)$-shift operator on $r_{i,k}$.
\begin{lem} \label{sr} The operators $s_{i,k}$ and $\tau_i$ act on $r_{i,k}$ as follows:
\begin{equation}\label{sr exact}
r_{i,j}s_{k,\ell}=s_{k,\ell}r_{s_{k,\ell}(i,j)}.
\end{equation}
We also have that
\begin{multicols}{2}
\begin{enumerate}
\item $s_ir_{i,k}=r_{i+1,k}s_i \text{ if } k \neq i+1 $
\item $s_i r_i = r_i s_i $
\item $s_i r_{i+1}=r_{i,i+2}s_i$
\item $s_{i+1}r_i=r_{i,i+2}s_{i+1}$
\item $s_i r_j=r_j s_i \text{ for } |i-j| \geq 2  \label{s_ir_j}$
\item $\tau_i r_j = r_j \tau_i \text{ for } |i-j| \geq 2. $
\end{enumerate}
\end{multicols}
\end{lem}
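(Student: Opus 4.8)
The plan is to isolate a single conjugation identity from which \eqref{sr exact} and items (1)--(5) all follow by specialization, and then to treat (6) separately, since it involves the shift $\tau_i$ rather than a transposition. The guiding principle is that $s_{i,j}$ and $\pd_{i,j}$ are both assembled out of the two indices $i,j$, so conjugating by a permutation should merely relabel those indices.

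First I would prove the operator conjugation formula $s_{k,\ell}\,\pd_{i,j}\,s_{k,\ell} = \pd_{s_{k,\ell}(i,j)}$ directly from the explicit expression \eqref{kkoform}. Writing $w = s_{k,\ell}$ (an involution, so $w^{-1}=w$), I evaluate $w\,\pd_{i,j}\,w$ on a homogeneous $g$, set $f = w(g)$, and use that $w$ is a degree-preserving algebra automorphism with $w(x_a) = x_{w(a)}$ together with the standard transposition conjugation $w\,s_{i,j}\,w = s_{w(i),w(j)}$. Each ingredient of \eqref{kkoform} --- the prefactor $(x_j^2-x_i^2)^{-1}$, the factor $(x_j-x_i)$, and $s_{i,j}(f)$ --- transforms by replacing $i,j$ with $w(i),w(j)$, while the Koszul sign $(-1)^{|f|}$ is untouched because $w$ preserves the $\Z/2$-degree; the result is exactly $\pd_{w(i),w(j)}(g)$. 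Combining this with the group conjugation of transpositions gives
\[
s_{k,\ell}\,r_{i,j}\,s_{k,\ell} = \bigl(s_{k,\ell}\,\pd_{i,j}\,s_{k,\ell}\bigr)\bigl(s_{k,\ell}\,s_{i,j}\,s_{k,\ell}\bigr) = \pd_{s_{k,\ell}(i,j)}\,s_{s_{k,\ell}(i,j)} = r_{s_{k,\ell}(i,j)},
\]
and multiplying on the left by $s_{k,\ell}$ (using $s_{k,\ell}^2=\mathrm{id}$) yields \eqref{sr exact}.

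Next I would record the symmetry $\pd_{i,k}=\pd_{k,i}$, hence $r_{i,k}=r_{k,i}$: swapping $i\leftrightarrow k$ in \eqref{kkoform} flips the sign of both the prefactor $(x_k^2-x_i^2)^{-1}$ and the factor $(x_k-x_i)$ while leaving $s_{i,k}=s_{k,i}$ fixed, so the two sign changes cancel. With this in hand, each of (1)--(5) is simply \eqref{sr exact} (equivalently its mirror $s_{k,\ell}\,r_{i,j}=r_{s_{k,\ell}(i,j)}\,s_{k,\ell}$) for a specific index choice, where I compute $s_{k,\ell}(i,j)$ from the action of the transposition on the pair: for (1) the pair $(i,k)$ with $k\neq i+1$ maps to $(i+1,k)$; for (2) the pair $(i,i+1)$ maps to $(i+1,i)$ and I invoke $r_{i+1,i}=r_{i,i+1}$; for (3) and (4) one gets $(i,i+2)$; and for (5), when $|i-j|\geq2$ the transposition $s_i$ fixes both $j$ and $j+1$, so the pair is unchanged.

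Finally, for (6) I would argue that when $|i-j|\geq2$, so that $i\notin\{j,j+1\}$, the automorphism $\tau_i$ commutes separately with $s_{j,j+1}$ and with $\pd_{j,j+1}$; composing then gives $\tau_i r_j = \tau_i\,\pd_{j,j+1}\,s_{j,j+1}=\pd_{j,j+1}\,\tau_i\,s_{j,j+1}=\pd_{j,j+1}\,s_{j,j+1}\,\tau_i = r_j\tau_i$. Commutation with $s_{j,j+1}$ holds because the two act on disjoint variables, and commutation with $\pd_{j,j+1}$ follows by applying $\tau_i$ to \eqref{kkoform}: since $\tau_i$ fixes every $x_j,x_{j+1}$-factor, preserves $|f|$, and commutes past $s_{j,j+1}$, it passes through $\pd_{j,j+1}$ unchanged. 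I do not anticipate a genuine obstacle here; the only step demanding real care is the conjugation formula for $\pd$, where the denominators in \eqref{kkoform} and the Koszul signs must be tracked precisely, and the observation that unlocks everything is that $s_{k,\ell}$ and $\tau_i$ both preserve the $\Z/2$-degree.
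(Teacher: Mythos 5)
Your proposal is correct and follows essentially the same route as the paper: the paper quotes Lemma 2.19(1) of \cite{ekl} for the sliding relation $\pd_{i,j}s_{k,\ell}=s_{k,\ell}\pd_{s_{k,\ell}(i,j)}$, multiplies by $s_{i,j}$ to obtain \eqref{sr exact}, treats (1)--(5) as special cases, and derives (6) from $\tau_i s_j=s_j\tau_i$ together with $\tau_i(x_j)=x_j$ for $i\neq j$ --- precisely your skeleton, the only difference being that you re-derive the sliding relation directly from \eqref{kkoform} rather than citing it. If anything your write-up is slightly more complete, since the symmetry $\pd_{i,k}=\pd_{k,i}$ (hence $r_{i,k}=r_{k,i}$), which you prove and which is genuinely needed for item (2), is used only implicitly in the paper when it writes $r_{i,j}=\pd_{i,j}s_{i,j}=s_{i,j}\pd_{i,j}$.
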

\begin{proof}

Recall the following relationship between $\pd_{i,j}$ and $s_{k,\ell}$ for $i \neq j$ and $k \neq \ell$, from Lemma 2.19 (1) of \cite{ekl}: 
\begin{equation} \label{ekl lemma}
\pd_{i,j}s_{k,\ell}=s_{k,\ell}\pd_{s_{k,\ell}(i,j)}.
\end{equation}
Multiplying both sides by $s_{i,j}$, we obtain that
\begin{align*}
s_{i,j}\pd_{i,j}s_{k,\ell}&=s_{i,j}s_{k,\ell}\pd_{s_{k,\ell}(i,j)} \\
&=s_{k,\ell}s_{k,\ell}(i,j)\pd_{s_{k,\ell}(i,j)},
\end{align*}
which implies the desired result since $r_{i,j}=\pd_{i,j}s_{i,j}=s_{i,j}\pd_{i,j}$. Properties 1-5 are special cases of equation \ref{sr exact}. Property 6 follows from $\tau_i s_j = s_j \tau_i$ and the fact that $\tau_i(x_j)=x_j$ for $i \neq j$.
\begin{rem}
Differences between our formulas and those of \cite{ekl} are due to a difference of sign convention in the action of $s_{i,j}$ on $P^-$.
\end{rem}
We now show that the properties of the $r_{i,k}$ are similar to those of the odd divided difference operator $\pd_{i,k}$.
\begin{lem} The following relations hold:
\begin{multicols}{2}
\begin{enumerate}
\item $r_i^2=0$
\item $r_ir_j+r_jr_i=0 \text{ for }|i-j| \geq 2$
\item $r_ir_{i+1}r_i=r_{i+1}r_ir_{i+1}$
\item $r_{i,k}(fg)=r_{i,k}(f)s_{i,k}(g)+(-1)^{|f|}fr_{i,k}(g) \label{leibniz r_i}$
\item $r_i x_{i+1} + x_{i+1}r_i=r_ix_i+x_ir_i=s_i \label{leib1}$
\item $r_j x_i + x_j r_i =0 \text{ for }i \neq j,j+1 \label{leib3}$.
\end{enumerate}
\end{multicols}
\end{lem}
\begin{proof}
Since $s_i r_i=r_i s_i$ and $r_i=\pd_i s_i$, it follows that $s_i\pd_i=\pd_is_i$. Then, since $\pd_i^2=0$, $r_i^2=0$ as well. Due to Equation \ref{s_ir_j} from Lemma \ref{sr}, we have that $s_ir_j=r_js_i$ for $|i-j| \geq 2$, so $s_i\pd_j=\pd_js_i$. Thus, $r_i$ and $r_j$ anti-commute since $\pd_i\pd_j+\pd_j\pd_i=0$. The operators $r_i$ also satisfy braid relations, which we show by inductively reducing to $i=1$, and then using \ref{ekl lemma} and $s_i\pd_i=\pd_is_i$ repeatedly:
\begin{align*}
&r_1r_2r_1=s_1\pd_1 s_2\pd_2s_1\pd_1=s_1s_2\pd_{1,3}s_1\pd_{1,3}\pd_1=s_1s_2s_1\pd_{2,3}\pd_{1,3}\pd_{1,2} \\
&r_2r_1r_2=s_2\pd_2s_1\pd_1s_2\pd_2=s_2s_1\pd_{1,3}s_2\pd_{1,3}\pd_2=s_2s_1s_2\pd_{1,2}\pd_{1,3}\pd_{2,3}.
\end{align*}
%Therefore, $r_1r_2r_1=r_2r_1r_2$ since the $s_i$ braid, and since $\pd_{2,3}\pd_{1,3}\pd_{1,2}=\pd_{1,2}\pd_{1,3}\pd_{2,3}$ by symmetry.
Since $s_1s_2s_1=s_2s_1s_2$ and $\pd_{2,3}\pd_{1,3}\pd_{1,2}=\pd_{1,2}\pd_{1,3}\pd_{2,3}$ by symmetry, we conclude that $r_1r_2r_1=r_2r_1r_2$.
The Leibniz rule for $r_{i,k}$ (equation 4 of this lemma) follows immediately from the Leibniz rule for $\pd_{i,k}$. Since $r_i(x_i)=r_i(x_{i+1})=1$ and $r_i(x_j)=0$ for $j \neq i,i+1$, equations \ref{leib1} and \ref{leib3} follow from the Leibniz rule for $r_{i,k}$. 
\end{proof}
We also desire an explicit definition of the $r_{i,k}$ analogous to that of the odd divided difference operator of \cite{ekl}. To find such an expression, we use a preparatory lemma.
\begin{lem} \label{tau trans lem} For all $f \in $P$^-$ and $1 \leq i \neq k \leq n$, we have
\begin{equation*}
s_{i,k}x_i\tau_i(f)-s_{i,k}x_{k}\tau_{k}(f)=(-1)^{|f|}s_{i,k}(f)(x_i-x_k). 
\end{equation*}
\end{lem}
\begin{proof}
It suffices to prove the result for a monomial $x^{\lambda}=x_1^{\lambda_1}\ldots x_i^{\lambda_i} \ldots x_k^{\lambda_k} \ldots x_n^{\lambda_n}$, where $i<k$. We calculate that
\begin{align*}
&s_{i,k}x_i\tau_i(x^{\lambda})=(-1)^{\lambda_1+\ldots+\lambda_i}x_1^{\lambda_1}\ldots x_k^{\lambda_i+1} \ldots x_i^{\lambda_k} \ldots x_n^{\lambda_n} \\
&s_{i,k}x_{k}\tau_{k}(x^{\lambda})=(-1)^{\lambda_1+\ldots+\lambda_{k-1}}x_1^{\lambda_1}\ldots x_k^{\lambda_i} \ldots x_i^{\lambda_k+1} \ldots x_n^{\lambda_n} \\
&s_{i,k}(x^{\lambda})x_i=(-1)^{\lambda_{k+1}+\ldots+\lambda_n}x_1^{\lambda_1}\ldots x_k^{\lambda_i} \ldots x_i^{\lambda_{k+1}} \ldots x_n^{\lambda_n} \\
&s_{i,k}(x^{\lambda})x_k=(-1)^{\lambda_{i+1}+\ldots+\lambda_n}x_1^{\lambda_1}\ldots x_k^{\lambda_{i+1}} \ldots x_i^{\lambda_k} \ldots x_n^{\lambda_n}.
\end{align*}
Since $|f|=\lambda_1+\ldots+\lambda_n$, the desired result follows.
\end{proof} 
\begin{lem} \label{grand finale r_i} The operator $r_{i,k}$ has explicit form $r_{i,k}=(x_i^2-x_k^2)^{-1}[(x_i-x_k)s_{i,k}-x_i\tau_i+x_k\tau_k]$. \end{lem}
\begin{proof}
Follows from Lemma \ref{tau trans lem} and Equation \ref{kkoform}.
\end{proof}
We will now connect the above results to the odd Dunkl operator introduced by Khongsap and Wang in \cite{kw}.
\begin{defn} Define an operator $\delta_i$ by $\delta_i=(2x_i)^{-1}(1-\tau_i)$. \end{defn}
The above super-derivative can also be defined inductively, by imposing that $\delta_i(x_j)=1$ if $i=j$ and $0$ otherwise. We then extend the action to monomials as follows: 
\begin{equation*}
\delta_i(x_{a_1}x_{a_2}\ldots x_{a_{\ell}})=\sum_{k=1}^{\ell} (-1)^{k-1}x_{a_1}\ldots\delta_i(x_{a_k})x_{a_{k+1}}\ldots x_{a_{\ell}}.
\end{equation*}
The operator $\delta_i$ is \emph{a priori} from Laurent skew polynomials to Laurent skew polynomials, but it is easy to check that it preserves the subalgebra of skew polynomials.
Khongsap and Wang found an odd analog of the Dunkl operator, given by 
\begin{equation} \label{kw form}
\eta_i=t\delta_i+u \sum_{k \neq i} (x_i^2-x_k^2)^{-1}[(x_i-x_k)s_{i,k}-x_i\tau_i+x_k\tau_k],
\end{equation}
where $t,u \in \C^\times$. Their operators anti-commute; $\eta_i\eta_j+\eta_j\eta_i=0$ for $i \neq j$. 

By Lemma \ref{grand finale r_i}, this odd Dunkl operator may be expressed as 
\begin{equation} \label{etaexplicit}
\eta_i=t\delta_i+u \sum_{k \neq i} \pd_{i,k}s_{i,k}.
\end{equation}
By analogy with the commutative case, discussed in Section \ref{background}, the operator $r_{i,k}$ plays the same role in the odd theory that the even divided difference operator plays in the even theory. 

\section{Classical Yang-Baxter Equation and the Dunkl Laplacian} \label{cybe section}
\begin{thm} \label{yang baxter} Let 
\begin{equation}
H_{1,2,3}=[r_{1,2},r_{1,3}]_++[r_{1,3},r_{2,3}]_++[r_{1,2},r_{2,3}]_+,
\end{equation}
where $[p,q]_+=pq+qp$ is the anti-commutator. Then, the operators $r_{i,k}$ satisfy the classical Yang-Baxter equation
\begin{equation}
H_{1,2,3}=0 \label{yb}.
\end{equation}
\end{thm}
\begin{proof} To avoid a cumbersome direct calculation, we instead use an inductive approach. Namely, suppose that $H_{1,2,3}(f)=0$ for some function $f \in P^-$. Then, we show that $H_{1,2,3}(x_if)=0$, for all integers $i \geq 1$. Note that, for $i=1$, 
\begin{equation} \label{yb1}
\begin{aligned}
r_{1,2}r_{1,3}x_1&=r_{1,2}(s_{1,3}-x_1r_{1,3}) \\
&=r_{1,2}s_{1,3}-r_{1,2}(x_1r_{1,3}) \\
&=r_{1,2}s_{1,3}-(s_{1,2}r_{1,3}-x_1r_{1,2}r_{1,3}) \\
&=r_{1,2}s_{1,3}-s_{1,2}r_{1,3}+x_1r_{1,2}r_{1,3},
\end{aligned}
\end{equation}
where we have used Equation \ref{leibniz r_i} twice. Similarly, we find that
\begin{align}
&r_{1,3}r_{1,2}x_1=r_{1,3}s_{1,2}-s_{1,3}r_{1,2}+x_1r_{1,3}r_{1,2} \\
&r_{2,3}r_{1,2}x_1=r_{2,3}s_{1,2}+x_1r_{2,3}r_{1,2} \\
&r_{1,2}r_{2,3}x_1=-s_{1,2}r_{2,3}+x_1r_{1,2}r_{2,3} \\
&r_{1,3}r_{2,3}x_1=-s_{1,3}r_{2,3}+x_1r_{1,3}r_{2,3} \\
&r_{2,3}r_{1,3}x_1=r_{2,3}s_{1,3}+x_1r_{2,3}r_{1,3}. \label{yb6}
\end{align}
By our inductive hypothesis, 
\begin{equation*}
x_1(r_{1,2}r_{1,3}+r_{1,3}r_{1,2}+r_{2,3}r_{1,2}+r_{1,2}r_{2,3}+r_{1,3}r_{2,3}+r_{2,3}r_{1,3})=0. 
\end{equation*}
Keeping this in mind, add Equations \ref{yb1}-\ref{yb6} to show that
\begin{align*}
H_{1,2,3}x_1&=r_{1,2}s_{1,3}-s_{1,2}r_{1,3}+r_{1,3}s_{1,2}-s_{1,3}r_{1,2}+r_{2,3}s_{1,2}-s_{1,2}r_{2,3}-s_{1,3}r_{2,3}+r_{2,3}s_{1,3}\\
&=r_{1,2}s_{1,3}-r_{2,3}s_{1,2}+r_{1,3}s_{1,2}-r_{2,3}s_{1,3}+r_{2,3}s_{1,2}-r_{1,3}s_{1,2}-r_{1,2}s_{1,3}+r_{2,3}s_{1,3} \\
&=0,
\end{align*}
where we have repeatedly used Lemma \ref{sr} to slide $r_{i,j}$ past $s_{k,\ell}$. 

We can similarly show that $H_{1,2,3}x_2=H_{1,2,3}x_3=0$. Since $r_{j,k}x_i=x_ir_{j,k}$ for $i>3$ and $j,k \in (1,2,3)$, it also follows that $H_{1,2,3}x_i=0$ for $i>3$, proving the desired result. 
\end{proof}

\begin{cor}  \label{corollary combo}
The double summation $\sum_{i = 1} ^n \left(\sum_{k \neq i} r_{i,k}\right)^2=0$.
\end{cor}
\begin{proof}
The expansion of this double summation has $n(n-1)^2$ total terms. Since $r_{i,j}^2=0$, $n(n-1)$ of these terms are immediately zero, leaving $n(n-1)(n-2)$ terms of the form $r_{i,j}r_{k,\ell}$, where $i=k$ and $j=\ell$ are not both true. By Theorem \ref{yang baxter}, the sum of all six terms of the form $r_{i,j}r_{k,\ell}$, where $i,j,k,\ell \in {a,b,c}$ for distinct integers $1 \leq a,b,c\leq n$, $i \neq j$, and $k\neq \ell$, is zero. There are $\binom{n}{3}$ ways to choose integers $a,b,c$, and for each choice of $a,b,c$, six terms of the form $r_{i,j}r_{k,\ell}$ vanish. This fact eliminates all the remaining $6\binom{n}{3}=n(n-1)(n-2)$ terms of the double summation. 
\end{proof}

As an application of the results in this section, we will compute the odd Dunkl Laplacian: $\sum_{i=1}^n \eta_i^2$. We will first require a lemma involving the commutator of $\tau_i$ and $r_{i,k}$. 
\begin{lem} \label{prep laplacian}
The equation $x_i^{-1}[r_{i,k},\tau_i]=(x_i^2-x_k^2)^{-1}(s_{i,k}(\tau_i+\tau_k)-x_i^{-1}x_ks_{i,k}(\tau_i-\tau_k)-2)$ holds.
\end{lem}
\begin{proof}
By Lemma \ref{grand finale r_i}, 
\begin{align*}
&r_{i,k}\tau_i=(x_i^2-x_k^2)^{-1}((x_i-x_k)s_{i,k}\tau_i-x_i\tau_i^2+x_k\tau_k\tau_i) \\
&\tau_i r_{i,k}=(x_i^2-x_k^2)^{-1}(-(x_i+x_k)s_{i,k}\tau_k+x_i\tau_i^2+x_k\tau_i\tau_k).
\end{align*}
Since $\tau_i\tau_k=\tau_k\tau_i$ and $\tau_i^2=1$, the result follows by subtraction.
\end{proof}
Now, define $A_i=(2x_i)^{-1}(1-\tau_i)$ and $B_i=\sum_{ k \neq i}r_{i,k}$ so that the odd Dunkl operator $\eta_i$ of Khongsap and Wang may be expressed as $\eta_i=tA_i+uB_i$. Note that 
\begin{equation} \label{a_i^2}
\begin{aligned}
A_i^2&=\frac{1}{4}x_i^{-1}(1-\tau_i)x_i^{-1}(1-\tau_i) \\
&=\frac{1}{2}x_i^{-2}(1-\tau_i)(1-\tau_i) \\
&=x_i^{-2}(1-\tau_i),
\end{aligned}
\end{equation}
since $\tau_i(x_i^{-1})=-x_i^{-1}$ and $\tau_i^2=1$. 

\begin{lem} \label {sum ab+ba=0}
The relation $\sum_{i=1}^n (A_iB_i+B_iA_i)=0$ holds. 
\end{lem}
\begin{proof}
Due to the Leibniz Rule for $r_{i,k}$ (equation \ref{leibniz r_i}), we can find that $0=r_{i,k}(x_ix_i^{-1})=x_k^{-1}-x_ir_{i,k}(x_i^{-1})$, so $r_{i,k}(x_i^{-1})=x_i^{-1}x_k^{-1}$. It follows that $r_{i,k}(x_i^{-1}f)=x_i^{-1}x_k^{-1}s_{i,k}(f)-x_i^{-1}r_{i,k}(f)$. Using this fact, 
\begin{align*}
\sum_{i=1}^n B_iA_i&=\frac{1}{2}\sum_{i=1}^n\sum_{k \neq i} r_{i,k} x_i^{-1}(1-\tau_i) \\
&=\frac{1}{2}\sum_{i=1}^n\sum_{k \neq i} (x_i^{-1}x_k^{-1}s_{i,k}(1-\tau_i)-x_i^{-1}r_{i,k}(1-\tau_i)).
\end{align*}
By definition, 
\begin{equation*}
\sum_{i=1}^n A_iB_i=\frac{1}{2}\sum_{i=1}^n\sum_{k \neq i} x_i^{-1}(1-\tau_i)r_{i,k}.
\end{equation*}
Adding the above two equations, we find that
\begin{equation}  \label{ab+ba}
\begin{aligned}
\sum_{i=1}^n [A_i,B_i]_+&=\frac{1}{2}\sum_{i=1}^n\sum_{k \neq i} (x_i^{-1}x_k^{-1}s_{i,k}(1-\tau_i)+x_i^{-1}(r_{i,k}\tau_i-\tau_ir_{i,k})) \\
&=\frac{1}{2}\sum_{i=1}^n\sum_{k \neq i} (x_i^{-1}x_k^{-1}s_{i,k}(1-\tau_i)+(x_i^2-x_k^2)^{-1}(s_{i,k}(\tau_i+\tau_k)-x_i^{-1}x_ks_{i,k}(\tau_i-\tau_k)-2)),
\end{aligned}
\end{equation}
where we have used Lemma \ref{prep laplacian}.

Each double summation repeats the pair of indices $(p,q)$ twice when $1 \leq p,q \leq n$, one time when $i=p$ and $k=q$ and once more when $i=q$ and $k=p$. Note that
\begin{equation*}
(x_i^2-x_k^2)^{-1}s_{i,k}(\tau_i+\tau_k)=-(x_k^2-x_i^2)s_{k,i}(\tau_k+\tau_i).
\end{equation*}
As a result, the sum $\frac{1}{2}\sum_{i=1}^n\sum_{k \neq i}(x_i^2-x_k^2)^{-1}s_{i,k}(\tau_i+\tau_k)=0$. Similarly, 
\begin{equation*}
\frac{1}{2}\sum_{i=1}^n\sum_{k \neq i}x_i^{-1}x_k^{-1}s_{i,k}=0 \text{ and } \frac{1}{2}\sum_{i=1}^n\sum_{k \neq i}(x_i^2-x_k^2)^{-1}(-2)=0,
\end{equation*}
since $x_i^{-1}x_k^{-1}+x_k^{-1}x_i^{-1}=0$. Equation \ref{ab+ba} then becomes
\begin{equation} \label{ab+ba 2}
\sum_{i=1}^n (A_iB_i+B_iA_i)=\frac{1}{2}\sum_{i=1}^n\sum_{k \neq i} (-x_i^{-1}x_k^{-1}s_{i,k}\tau_i-(x_i^2-x_k^2)^{-1}(x_i^{-1}x_ks_{i,k}(\tau_i-\tau_k)).
\end{equation}
However, note that
\begin{equation*}
x_i^{-1}x_k+x_k^{-1}x_i=-(x_i^2-x_k^2)x_i^{-1}x_k^{-1},
\end{equation*}
which implies 
\begin{equation} \label{cancel1}
(x_i^2-x_k^2)^{-1}x_i^{-1}x_ks_{i,k}(\tau_i-\tau_k)+(x_k^2-x_i^2)^{-1}x_k^{-1}x_is_{k,i}(\tau_k-\tau_i)=-x_i^{-1}x_k^{-1}s_{i,k}(\tau_i-\tau_k).
\end{equation}
Similarly, we find that
\begin{equation}
x_i^{-1}x_k^{-1}s_{i,k}\tau_i+x_k^{-1}x_i^{-1}s_{i,k}\tau_k=x_i^{-1}x_k^{-1}s_{i,k}(\tau_i-\tau_k).
\end{equation}
As a result, equation \ref{ab+ba 2} becomes
\begin{equation*} 
\sum_{i=1}^n (A_iB_i+B_iA_i)=-\frac{1}{2}\sum_{1 \leq i < k \leq n} (x_i^{-1}x_k^{-1}s_{i,k}(\tau_i-\tau_k)-x_i^{-1}s_k^{-1}s_{i,k}(\tau_i-\tau_k))=0.
\end{equation*}
\end{proof}

We are now equipped to compute the Dunkl Laplacian in the odd case.

\begin{thm} 
The equation $\sum_{i=1}^n \eta_i^2=t^2\sum_{1 \leq i  \leq n}x_i^{-2}(1-\tau_i)$ holds.
\end{thm}
\begin{proof}
Since $\eta_i=tA_i+uB_i$, we have that 
\begin{equation*}
\sum_{i=1}^n \eta_i^2=t^2\sum_{i=1}^n A_i^2+tu\sum_{i=1}^n (A_iB_i+B_iA_i)+u^2\sum_{i=1}^n B_i^2
\end{equation*}
By Lemma \ref{sum ab+ba=0}, $\sum_{i=1}^n (A_iB_i+B_iA_i)=0$. By Corollary \ref{corollary combo}, $\sum_{i=1}^n B_i^2=0$. Therefore, 
\begin{equation*}
\sum_{i=1}^n \eta_i^2=t^2\sum_{i=1}^n A_i^2=t^2\sum_{1 \leq i  \leq n}x_i^{-2}(1-\tau_i),
\end{equation*}
by equation \ref{a_i^2}.
\end{proof}

\section{A Variant of the Khongsap-Wang Odd Dunkl Operator} \label{sl2}

In this section, we will show that a close variant of the odd Dunkl operator introduced by Khongsap and Wang can be used in the construction of three operators that satisfy the defining relations of the Lie algebra $\sltwo$. First, we will consider an operator $p_i$, which is different from $\delta_i$ but plays a similar role.
\begin{defn} The operator $p_i$ is a $\C$-linear map $P^- \rightarrow P^-$, which acts on monomials as follows: 
\begin{equation*}
p_i(x_1^{\lambda_1}\ldots x_i^{\lambda_i} \ldots x_n^{\lambda_n})=\lambda_i(-1)^{\lambda_1+\ldots+\lambda_{i-1}}x_1^{\lambda_1}\ldots x_i^{\lambda_i-1} \ldots x_n^{\lambda_n}. 
\end{equation*}
\end{defn}

\begin{rem} 
One may also introduce $p_i$ by using a nice Leibniz-like expression involving $\tau_i$, by defining
\begin{align*}
&p_i(x_j)=\delta_{ij}  \\
&p_i(fg)=p_i(f)g+(-1)^{|f}\tau_i(f)p_i(g) ,
\end{align*}
where $f,g \in P^-$ and $\delta_{ij}$ is the Kronecker delta. Now, note the analogous relationship between the degree-preserving operators $s_i$ and $\tau_i$ in their respective Leibniz rules for the $(-1)$-degree operators $\pd_i$ and $p_i$. This provides motivation for the definition of $p_i$ and suggests its natural role in our theory. 
\end{rem}

Now consider a modified version of $\eta_i$.
\begin{defn} Let \begin{equation} D_i=tp_i+u\sum_{k \neq i} r_{i,k}. \end{equation}
\end{defn}
%Note that this operator substitutes $p_i$ for $\delta_i$ in the odd Dunkl operator of Wang and Khongsap \ref{kw form}. 
\begin{defn} Introduce the odd $r^2$, Euler, and $\Delta$ operators as below:
\begin{align} \label{defops}
&r^2=(2t)^{-1}\sum_{i=1}^n x_i^2 \\
&E=\sum_{i=1}^n x_ip_i+\frac{n}{2}+\frac{u}{t}\sum_{k \neq i}s_{i,k} \\
&\Delta=-(2t)^{-1}\sum_{i=1}^n D_i^2.
\end{align}
\end{defn}
\begin{rem}
Heckman, who used the even Dunkl operators to find a $\sltwo$-triple useful in harmonic analysis, uses the convention $t=1$ \cite{heckman}. For now, we will consider $t$ to be a fixed constant in $\C^\times$. 
\end{rem}
\begin{rem}
The commutator in the setting of superalgebras is usually defined as $[a,b]=ab-(-1)^{|a||b|}ba$, where $|a|$ and $|b|$ are the degrees of $a$ and $b$, respectively. However, since all of the operators we will be considering in this section have even degree, there is no need to distinguish between commutators and super-commutators. 
\end{rem}

To construct an $\sltwo$ action from these operators, we will require a series of lemmas regarding the action of portions of the odd Euler operator $E$. In the next lemma, we investigate the action of the first term of the odd Euler operator on skew polynomials. 
\begin{lem} \label{lem1}The operator $\sum_{i=1}^n x_i p_i$ acts by multiplication by $|f|$ on the space of homogenous functions $f \in $P$^-$. \end{lem}
\begin{proof} It suffices to show the result for a monomial $x^{\lambda}=x_1^{\lambda_1}\ldots x_i^{\lambda_i} \ldots x_n^{\lambda_n}$. Note that 
\begin{equation*}
x_i p_i(x^{\lambda})=\lambda_ix_i(-1)^{\lambda_1+\ldots+\lambda_{i-1}}x_1^{\lambda_1}\ldots x_i^{\lambda_i-1} \ldots x_n^{\lambda_n}=\lambda_i x^{\lambda}.
\end{equation*}
By summing over all indices $i$, we obtain that 
\begin{equation*}
\sum_{i=1}^n x_i p_i(x^{\lambda})=(\lambda_1+\lambda_2+\ldots+\lambda_n)x^{\lambda},
\end{equation*}
which implies the desired result. 
\end{proof}
The above lemma holds true in the even case as well, where $p_i$ is replaced by the partial derivative with respect to $x_i$. We now prove some properties about the action of the third term of the odd Euler operator on $r^2$ and $\Delta$. 
\begin{lem} \label{lem2}  The commutation relation $\left[\sum_{k \neq i}^{} s_{i,k},\Delta\right]=0$ holds. \end{lem}
\begin{proof}
Note that $s_{j,k} p_i=p_j s_{j,k} \text{ if }i=k$, $s_{j,k} p_i=p_k s_{j,k} \text{ if }i=j$, and $s_{j,k} p_i=p_i s_{j,k} \text{ otherwise}$. Indeed, these relations can be verified by checking if they are true for $x_i^ax_j^bx_k^c$, $a,b,c \in \Z_+$, and then extending by linearity. We prove that $s_{j,k}p_i=p_j s_{j,k}$ if $i=k$, and the other two cases are similar. Without loss of generality, let $j<k$, and observe that
\begin{align*} 
&s_{j,k} p_k (x_j^ax_k^b)=b(-1)^as_{j,k}(x_j^{a}x_k^{b-1})= b(-1)^ax_k^ax_j^{b-1}=b(-1)^{ab}x_j^{b-1}x_k^a  \\
&p_j s_{j,k} (x_j^ax_k^b)=(-1)^{ab}p_j(x_j^bx_k^a)=b(-1)^{ab}x_j^{b-1}x_k^a.
\end{align*}
By our work in Lemma \ref{sr}, one can deduce that $s_{j,k}r_{\ell,m}=r_{s_{j,k}(\ell,m)} s_{i,j}$. As a consequence, we find that $s_{j,k} D_i=D_{s_{j,k}(i)} s_{j,k}$. By an easy induction, we now have that $s_{j,k} \Delta=\Delta s_{j,k}$. Using the above equation multiple times proves the desired result. 
\end{proof}

%We have a similar result for the action of the third term of the odd Euler operator and $r^2$: 
\begin{lem} \label{lem3} The commutation relation $\left[\sum_{k \neq i}s_{i,k},r^2\right]=0$ holds. \end{lem}
\begin{proof}
Follows since $s_{j,k} x_j=x_k s_{j,k}$, $s_{j,k} x_k=x_j s_{j,k}$ and $s_{j,k} x_i = x_i s_{j,k}$ if $i\neq j,k$. 
\end{proof}
We are now ready to obtain two commutativity relations involving the odd Euler operator $E$.
\begin{thm} The odd Euler operator and $r^2$ satisfy the following commutation relations:
\begin{align}
&[E,r^2]=2r^2 \\
&[E,\Delta]=-2\Delta.
\end{align}
\end{thm}
\begin{proof}
Since $r^2$ has degree $2$ and $\Delta$ has degree $-2$, the theorem follows from Lemmas \ref{lem1}, \ref{lem2}, and \ref{lem3}. 
\end{proof}

We also need to investigate what the third commutativity relation $[r^2,\Delta]$ turns out to be. We will prove one lemma before doing so. 
\begin{lem} \label{xD+Dx} For $i=1$ to $n$, the equation $x_iD_i+D_ix_i=2tx_ip_i+t+u\sum_{k \neq i}s_{i,k}$ holds. \end{lem}
\begin{proof}
Recall that
\begin{equation*}
D_i=tp_i+u\sum_{k \neq i} (x_i^2-x_k^2)^{-1}[(x_i-x_k)s_{i,k}-x_i\tau_i+x_k\tau_k].
\end{equation*}
Therefore, since $p_ix_i=x_ip_i+1$,
\begin{align*}
D_ix_i=tx_ip_i+t+u\sum_{k \neq i} (x_i^2-x_k^2)^{-1}(x_ix_k-x_k^2)s_{i,k}+\sum_{k \neq i}(x_i^2-x_k^2)[x_i^2\tau_i-x_ix_k\tau_k] \\
x_iD_i=tx_ip_i+u\sum_{k \neq i}(x_i^2-x_k^2)^{-1}(x_i^2-x_ix_k)s_{i,k}+\sum_{k \neq i}(x_i^2-x_k^2)^{-1}[-x_i^2\tau_i+x_ix_k\tau_k].
\end{align*}
Adding, we obtain the desired result. 
\end{proof}

We now have the tools to find the third relation between $r^2$, $E$, and $\Delta$:
\begin{thm} The commutation relation $[r^2,\Delta]=E$ holds. \end{thm}
\begin{proof}
We will first find $[r^2,D_i]$. The derivative $p_i$, much like the partial derivative in the even case, satisfies the properties $p_ix_j=-x_jp_i \text{ for } i \neq j$ and $p_ix_i=x_ip_i+1$. Now, suppose that $i \neq j$. Then, 
\begin{align*}
D_ix_j^2&=tp_ix_j^2+ux_j^2\sum_{k \neq i \neq j}r_{i,k}+x_k^2(x_i^2-x_k^2)^{-1}[-x_i\tau_i+x_k\tau_k]+x_i^2(x_i^2-x_k^2)^{-1}[(x_i-x_k)s_{i,k}] \\
&=tx_j^2p_i+ux_j^2\sum_{k \neq i}r_{i,k}+(x_i-x_j)s_{i,j} 
\end{align*}
Now, we will find $D_ix_i^2$: 
\begin{align*}
D_ix_i^2&=tp_ix_i^2+\sum_{k \neq i} x_k^2(x_i^2-x_k^2)^{-1}(x_i-x_k)s_{i,k}+x_i^2\sum_{k \neq i}(x_i^2-x_k^2)^{-1}[-x_i\tau_i+x_k\tau_k] \\
&=tx_i^2p_i+2tx_i+x_i^2\sum_{k \neq i}r_{i,k} - \sum_{k\neq i}(x_i-x_k)s_{i,k}.
\end{align*}
Therefore, $\left[\sum_{i=1}^nx_i^2,D_i\right]=-2tx_i$. This implies that
\begin{equation} \label{r^2D-Dr^2}
r^2D_i-D_ir^2=-x_i.
\end{equation}
As a result, we find that
\begin{align*}
[r^2,\Delta]&=-(2t)^{-1}\sum_{i=1}^n [r^2,D_j^2]=-(2t)^{-1}\sum_{i=1}^n(r^2D_j^2-D_j^2r^2)  \\
&=-(2t)^{-1}\sum_{i=1}^n[(D_jr^2D_j-x_jD_j)-(D_jr^2D_j+D_jx_j)]  \\
&=(2t)^{-1}\sum_{i=1}^n(x_iD_i+D_ix_i),
\end{align*}
where we have used \ref{r^2D-Dr^2}. Now, by Lemma \ref{xD+Dx}, 
\begin{equation*}
[r^2,\Delta]=\sum_{i=1}^nx_ip_i+\frac{n}{2}+\frac{u}{t}\sum_{k \neq i}s_{i,k}=E,
\end{equation*}
as desired. 
\end{proof}
To summarize, we have found operators $E$,$r^2$, and $\Delta$, similar to their even counterparts, which satisfy the defining relations of the Lie algebra $\sltwo$: 
\begin{align*}
&[E,r^2]=2r^2 \\
&[E,\Delta]=-2\Delta \\
&[r^2,\Delta]=E.
\end{align*}
\begin{rem}
If one uses the odd Dunkl operator $\eta_i$ as found in \cite{kw} instead of the $D_i$ introduced here, the $r^2$, $E$, and $\Delta$ operators do not generate $\sltwo$.
\end{rem}
\begin{rem}
Although our results hold true for all $t$ and $u$ in $\C$, one typically sets $t=1$ and $u=\alpha^{-1}$ for some $\alpha \in \C^{\times}$, since without loss of generality one of $t$ and $u$ may equal $1$. 
\end{rem}
\begin{rem}
In the even case, let $X$ be a Euclidean vector space with dimension $n$ and let $\C[X]$ be the algebra of $\C$-valued functions on $X$. Then, this result about $\sltwo$  plays a major role in the study of higher differential operators on $\C[X]$. This is because the representation theory of $\sltwo$ allows for the reduction of degree to the second order \cite{heckman}. As a result, our results in this section should correspondingly have a role in further studying differential operators in the odd case. 
\end{rem}

\section{$q$-nilHecke Algebras} \label{nh}

Until now, we have been concerned with the odd symmetric polynomials in variables $x_1,x_2,\ldots,x_n$, where $x_ix_j=(-1)x_jx_i$ for $1 \leq i \neq j \leq n$. This immediately suggests the question: what if one replaces the $-1$ by any constant $q \in \C^{\times}$? Specifically, we ask the following questions:
\begin{question} \label{q-question1}
Is it possible to study $q$-symmetric polynomials, for which $x_ix_j=qx_jx_i$ when $i>j$? 
\end{question}
\begin{question}
Are there $q$-analogs of even/odd divided difference operators and nilHecke algebras? So far, such structures are known only for the even case ($q=1$) and the odd case ($q=-1$).
\end{question}
In this section, we answer both questions in the affirmative.

We work in the $\Z$-graded, $q$-braided setting throughout.  Let $\C$ be a commutative ring and let $q\in\C^\times$ be a unit.  If $V,W$ are graded $\C$-modules and $v\in V$, $w\in W$ are homogeneous, the braiding is the ``$q$-twist'':
\begin{equation}\begin{split}
\tau_q:&V\otimes W\to W\otimes V\\
&v\otimes w\mapsto q^{|v||w|}w\otimes v,
\end{split}\end{equation}
where $|\cdot|$ is the degree function.  By $q$-algebra we mean an algebra object in the category of graded $\C$-modules equipped with this braided monoidal structure; likewise for $q$-bialgebras, $q$-Hopf algebras, and so forth.

\begin{rem}
Note that the $q$-twist described above is its own inverse only when $q^2=1$, which correlates to the even and odd cases. When $q^2 \neq 1$, the corresponding theory becomes more complex. Therefore, the $q$-nilHecke algebras that we introduce later in this section are nontrivial generalizations of the previously studied even and odd nilHecke algebras. 
\end{rem}

\begin{defn} The $q$-algebra $P^q_n$ is defined to be
\begin{equation}
P^q_n=\C\langle x_1,\ldots,x_n\rangle/(x_jx_i-qx_ix_j=0 \text{ if }i<j),
\end{equation}
where $|x_i|=1$ for $i=1,\ldots,n$.\end{defn}

Note that $P^q_n\cong\otimes_{i=1}^nP^q_1$.  There are two interesting subalgebras of $P^q_n$ that can be thought of as $q$-analogs of the symmetric polynomials.  Define the $k$-th \emph{elementary $q$-symmetric polynomial} to be
\begin{equation*}
e_k(x_1,\ldots,x_n)=\sum_{1\leq i_1<\ldots<i_k\leq n}x_{i_1}\cdots x_{i_n}
\end{equation*}
and define the $k$-th \emph{twisted elementary $q$-symmetric polynomial} to be
\begin{equation*}
\et_k(x_1,\ldots,x_n)=\sum_{1\leq i_1<\ldots<i_k\leq n}\xt_{i_1}\cdots\xt_{i_n},
\end{equation*}
where $\xt_j=q^{j-1}x_j$.

\begin{defn} The $q$-algebra of \emph{$q$-symmetric polynomials} in $n$ variables, denoted $\sym^q_n$, is the subalgebra of $P^q_n$ generated by $e_1,\ldots,e_n$.  Likewise for the \emph{twisted $q$-symmetric polynomials}, $\symt^q_n$, and $\et_1,\ldots,\et_n$.\end{defn}

The type A braid group on $n$ strands acts on $P^q_n$ by setting
\begin{multicols}{2}
\begin{enumerate}
\item $\sigma_i(x_j)=qx_{i+1} \text{ if } j=i$
\item $\sigma_i(x_j)=q^{-1}x_i \text{ if } j=i+1$
\item $\sigma_i(x_j)=qx_j \text{ if } j>i+1$
\item $\sigma_i(x_j)=q^{-1}x_j \text{ if } j<i$
\end{enumerate}
\end{multicols}
and extending multiplicatively. 
\begin{defn} For $i=1,\ldots,n-1$, the $i$-th \emph{$q$-divided difference operator} $\pd_i$ is the linear operator $P^q_n \to P^q_n$ defined by $\pd_i(x_i)=q$, $\pd_i(x_{i+1})=-1$, $\pd_i(x_j)=0$ for $j \neq i,i+1$, and 
\begin{equation} \label{leib}
\pd_i(fg)=\pd_i(f)g+\sigma_i(f)\pd_i(g),
\end{equation}
\end{defn}
for all functions $f,g \in P^q_n$. We call Equation \ref{leib} the $q$-Leibniz rule.  

\begin{lem} For every $i$ and every $j<k$, $\pd_i(x_kx_j-qx_jx_k)=0$.  \end{lem}
\begin{proof} 
Since $\pd_i(x_j)=0$ for $j>i+1$,  one may reduce the lemma to having to prove that $\pd_1(x_2x_1-qx_1x_2)=0$, $\pd_1(x_3x_1-qx_1x_3)=0$, and $\pd_1(x_3x_2-qx_2x_3)=0$. These statements follow from the $q$-Leibniz rule.  
\end{proof}
Therefore, $\pd_i$ is a well-defined operator on $P^q_n$.
\begin{lem} The following relations hold: \label{d_i x_i^k}
\begin{eqnarray*}
&\partial_i(x_i^k)=\sum\limits_{j=0}^{k-1}q^{jk-2j-j^2+k}x_i^jx_{i+1}^{k-1-j}\\
&\partial_i(x_{i+1}^k)=-\sum\limits_{j=0}^{k-1}q^{-j}x_i^jx_{i+1}^{k-1-j}.
\end{eqnarray*}
\end{lem}
\begin{proof}
We induct on $k$. The base case ($k=1$) follows from the definition of the $\pd_i$, and the powers of $q$ arise mostly from $x_{i+1}^n x_i^m=q^{mn}x_i^mx_{i+1}^n$ for all $m,n \in \Z_+$. 
\end{proof}

Our $q$-divided difference operators also annihilate the twisted elementary $q$-symmetric polynomials, just as the even divided difference operators annihilate the elementary symmetric functions.
\begin{lem} For every $i=1,\ldots,n-1$ and every $k$, $\pd_i(\et_k)=0$.
Hence $\symt^q_n\subseteq\bigcap_{i=1}^{n-1}\ker(\pd_i)$.\end{lem}
\begin{proof} We can express $\et_k$ as
\begin{equation*}
e_k=\sum_{\substack{|\undj|=k\\i,i+1\notin\undj}}\xt_{\undj}+\sum_{\substack{|\undj|=k-1\\i,i+1\notin\undj}}q^{f(\undj,i,k)}\xt_{\undj}(x_i+qx_{i+1})+\sum_{\substack{|\undj|=k-2\\i,i+1\notin\undj}}q^{g(\undj,i,k)}\xt_{\undj}x_ix_{i+1},
\end{equation*}
for certain $\Z$-valued functions $f,g$.  The result then follows from $\pd_i(x_i+qx_{i+1})=\pd_i(x_ix_{i+1})=0$ and the $q$-Leibniz rule.  
\end{proof}

Having discussed $q$-divided difference operators, we can now construct algebras for every $q \neq 0,1,-1$ that have many similarities to the even and odd nilHecke algebras. For every such $q$, we define a \emph{$q$-nilHecke algebra} generated by $x_i$ and $\pd_i$ for $1 \leq i \leq n$, subject to the relations found in the following two lemmas (\ref{qrels} and \ref{qbraid}).  

\begin{lem} \label{qrels} The following relations hold among the operators $\pd_i$ and $x_i$ (left multiplication by $x_i$):
\begin{multicols}{2}
\begin{enumerate}
\item $\pd_i^2=0 \label{delsquared}$
\item $\pd_j\pd_i-q\pd_i\pd_j=0\text{ for }j>i+1 \label{distance}$
\item $x_jx_i=qx_ix_j\text{ for }i<j \label{obvious}$
\item $\pd_ix_j-qx_j\pd_i=0\text{ for }j>i+1 \label{nilhecke1}$
\item $q\pd_ix_j-x_j\pd_i=0\text{ for }j<i \label{nilhecke2}$
\item $\pd_ix_i-qx_{i+1}\pd_i=q \label{akaleibniz1}$
\item $x_i\pd_i-q\pd_ix_{i+1}=q \label{akaleibniz2}$.
\end{enumerate}
\end{multicols}
\end{lem}
\begin{proof} 
To show that $\pd_i^2=0$, note that we can reduce to $i=1$ and proceed by induction. Since $\pd_i(1)=0$, the base case follows. Suppose that $\pd_i^2(f)=0$. Then, note that
\begin{align*}
&\pd_1^2(x_1f)=\pd_1(qf+qx_2\pd_1(f))= q\pd_1(f)-q\pd_1(f)+x_1\pd_1^2(f)=0  \\
&\pd_1^2(x_2f)=\pd_1(-f+q^{-1}x_1\pd_1(f))= -\pd_1(f)+\pd_1(f)+x_2\pd_1^2(f)=0  \\
&\pd_1^2(x_3f)=\pd_1(qx_3\pd_1(f))= q^2x_3\pd_1^2(f)=0,  
\end{align*}
which completes the proof of the first statement in the lemma. 

Statement \ref{obvious} follows by definition. Statements  \ref{nilhecke1}, \ref{nilhecke2}, \ref{akaleibniz1}, and \ref{akaleibniz2} follow from a suitable application of the $q$-Leibniz rule. Statement \ref{distance} follows from an inductive argument. We can reduce to $i=1$ and $j=3$. Suppose that $\pd_j\pd_i=q\pd_i\pd_j$ if $j>i+1$. Then,
\begin{align*}
&\pd_3\pd_1(x_1f)-q\pd_1\pd_3(x_1f)=(q\pd_3(f)+x_2\pd_3\pd_1(f))-q(\pd_3(f)+x_2\pd_1\pd_3(f))=0\\
&\pd_3\pd_1(x_2f)-q\pd_1\pd_3(x_2f)=(-\pd_3(f)+q^{-2}x_1\pd_3\pd_1(f))-q(-q^{-1}\pd_3(f)+q^{-2}x_1\pd_1\pd_3(f))=0. \\
&\pd_3\pd_1(x_3f)-q\pd_1\pd_3(x_3f)=(q^2\pd_1(f)+q^2x_4\pd_3\pd_1(f))-q(q\pd_1(f)+q^2x_4\pd_1\pd_3(f))=0 \\
&\pd_3\pd_1(x_4f)-q\pd_1\pd_3(x_4f)=(-q\pd_1(f)+x_3\pd_3\pd_1(f))-q(-\pd_1(f)+x_3\pd_1\pd_3(f))=0\\
&\pd_3\pd_1(x_5f)-q\pd_1\pd_3(x_5f)=q^2x_5\pd_3\pd_1(f)-q(q^2x_5\pd_1\pd_3(f))=0,
\end{align*}
thereby completing the induction.
\end{proof}

\begin{lem} \label{qbraid}
$\pd_i\pd_{i+1}\pd_i\pd_{i+1}\pd_i\pd_{i+1}+\pd_{i+1}\pd_i\pd_{i+1}\pd_i\pd_{i+1}\pd_i=0$.
\end{lem}
\begin{proof}
This result follows from an inductive argument; we reduce to $i=1$ and assume that the braid relation holds true for some function $f$. Then, we check that the braid relation is true for $x_1f$, $x_2f$, $x_3f$, and $x_4f$ (since the behavior of $x_jf$ for $j \geq 4$ is the same as that of $x_4f$). For brevity, we will show the argument for $x_2f$ only:
\\[6.5pt]

{\centering
    \begin{tabular}{p{7.8 cm}  p{7.8 cm}}
    $\pd_1\pd_2(x_2f)=q\pd_1(f)+q^2x_3\pd_1\pd_2(f)$ & $\pd_2\pd_1(x_2f)=-\pd_2(f)+q^{-2}x_1\pd_2\pd_1(f)$ \\[6.5pt]
    $\pd_{212}(x_2f)=q\pd_2\pd_1(f)-q^2\pd_1\pd_2(f)+qx_2\pd_{212}(f) $ &  $q\pd_{121}(x_2f)=-q\pd_1\pd_2(f)+\pd_2\pd_1(f)+\pd_{121}(f)$ \\[6.5pt] 
    $\pd_{1212}(x_2f)=q\pd_{121}(f)-q\pd_{212}(f)+x_1\pd_{1212}(f)$ & $\pd_{2121}(x_2f)=-\pd_{212}(f)+\pd_{121}(f)+x_3\pd_{2121}(f)$ \\[6.5pt] 
    $\pd_{21212}(x_2f)=q\pd_{2121}(f)+q^{-1}x_2\pd_{21212}(f)$ & $\pd_{12121}(x_2f)=-\pd_{1212}(f)+qx_3\pd_{12121}(f).$ \\\\
    \end{tabular}
\par} 
%\\[6.5pt]
We continue the above calculations to find that
\begin{align*}
&\pd_{121212}(x_2f)=q\pd_{12121}(f)+\pd_{21212}(f)+x_2\pd_{121212}(f) \\
&\pd_{212121}(x_2f)=-q\pd_{12121}(f)-\pd_{21212}(f)+x_2\pd_{121212}(f), 
\end{align*}

%$\pd_1\pd_2(x_2f)=q\pd_1(f)+q^2x_3\pd_1\pd_2(f)$
%$\pd_{212}(x_2f)=q\pd_2\pd_1(f)-q^2\pd_1\pd_2(f)+qx_2\pd_{212}(f) $
%$\pd_{1212}(x_2f)=q\pd_{121}(f)-q\pd_{212}(f)+x_1\pd_{1212}(f)$
%$\pd_{21212}(x_2f)=q\pd_{2121}(f)+q^{-1}x_2\pd_{21212}(f)$
%$\pd_2\pd_1(x_2f)=-\pd_2(f)+q^{-2}x_1\pd_2\pd_1(f)$
%$\pd_{121}(x_2f)=-\pd_1\pd_2(f)+q^{-1}\pd_2\pd_1(f)+q^{-1}\pd_{121}(f)$
%$\pd_{2121}(x_2f)=-\pd_{212}(f)+\pd_{121}(f)+x_3\pd_{2121}(f)$
%$\pd_{12121}(x_2f)=-\pd_{1212}(f)+qx_3\pd_{12121}(f)$
%\end{multicols}
%$\pd_{121212}(x_2f)=q\pd_{12121}(f)+\pd_{21212}(f)+x_2\pd_{121212}(f)$
%$\pd_{212121}(x_2f)=-q\pd_{12121}(f)-\pd_{21212}(f)+x_2\pd_{121212}(f)$,

and the braid relation for $x_2f$ follows from the inductive hypothesis. 
\end{proof}

\section{A Diagrammatic Approach to $q$-Symmetric Polynomials} \label{diagrams}

\subsection{Introduction to a $q$-Bialgebra}
In the previous section, we answered Question \ref{q-question1} in an algebraic way by defining $q$-analogs of the classical elementary and complete symmetric functions. In this section, we generalize the diagrammatic method used in \cite{ek} in order to study this question from the perspective of bialgebras.

Let $\nsym^q$ be a free, associative, $\Z$-graded $\C$-algebra with generators $h_m$ for $m \geq 0$. We define $h_0=1$ and $h_m=0$ for $m<0$, and let $q \in \C^{\times}$. The homogenous part of $\nsym^q$ of degree $\ell$ has a basis 
$\lbrace h_\alpha\rbrace_{\alpha\vDash k}$, where
\begin{equation*}
h_\alpha=h_{\alpha_1}\cdots h_{\alpha_z}\text{ for a composition }\alpha=(\alpha_1,\ldots,\alpha_z)\text{ of } \ell.
\end{equation*}
Define a multiplication for homogenous $x$ and $y$ on $\nsym^{q \otimes2}$ as follows, where deg($x$) denotes the degree of $x$: 
\begin{equation*}
(w \otimes x)(y \otimes z)=q^{\text{deg}(x)\text{deg}(y)}(wy \otimes xz).
\end{equation*}
We can make $\nsym^q$ into a $q$-bialgebra by letting the comultiplication on generators be
\begin{equation*}
\Delta(h_n)=\sum\limits_{k=0}^n h_k \otimes h_{n-k},
\end{equation*}
and by letting the counit be $\epsilon(x)=0$ if $x$ is homogenous and deg($x$)$>0$. 

We can impose, through the braiding structure, that: 
\begin{equation*}
\Delta(h_ah_b)=\sum\limits_{j=0}^{a}\sum\limits_{k=0}^b(h_j \otimes h_{a-j})(h_k \otimes h_{b-k})=\sum\limits_{j=0}^{a}\sum\limits_{k=0}^bq^{k(a-j)}(h_jh_k \otimes h_{a-j}h_{b-k}).
\end{equation*}
For any partitions $\lambda$ and $\mu$ of $n$, consider the set of double cosets of subroups $S_\lambda$ and $S_\mu$ of $S_n$: $S_\lambda\backslash S_n/S_\mu$. For every $C$ in this set, let $w_C$ be the minimal length representative of $C$ and let $\ell(w_C)$ be the length of this minimal length representative. We will now attribute a bilinear form to $\nsym^q$: 
\begin{equation*}
(h_\lambda,h_\mu)=\sum_{C\in S_\lambda\backslash S_n/S_\mu}q^{\ell(w_C)}.
\end{equation*}

This bilinear form admits a diagrammatic description. Let $h_n$ be an orange platform with $n$ non-intersecting strands coming out of it. When computing $(h_\lambda, h_\mu)$, with $\ell(\lambda)=z$ and $\ell(\mu)=y$, draw $z$ orange platforms at the top of the diagram, representing $\lambda_1$, $\lambda_2$,$\cdots$,$\lambda_z$. Draw $y$ orange platforms at the bottom of the diagram, representative of $\mu_1$, $\mu_2$,$\cdots$, $\mu_y$. We require that $|\lambda|=|\mu|$, so that the top platforms and bottom platforms have the same number of strands. 

Consider the example $(h_{121},h_{22})$. In the following diagram, snippets of the strands from each platform are shown. 
\begin{equation*}
\begin{tikzpicture}[scale=.7]
    % strands
    \draw[thick] (1,0) [out=90, in=-90] to (1,.5);
    \draw[thick] (2,0) [out=90, in=-90] to (2,.5);
	\draw[thick] (3,0) [out=90, in=-90] to (3,.5);
	\draw[thick] (4,0) [out=90, in=-90] to (4,.5);
	\draw[thick] (1,2.5) [out=90, in=-90] to (1,3);
	\draw[thick] (2,2.5) [out=90, in=-90] to (2,3);
	\draw[thick] (3,2.5) [out=90, in=-90] to (3,3);
	\draw[thick] (4,2.5) [out=90, in=-90] to (4,3);
	% platforms
	\draw[thick, fill=orange] (.75,2.75) rectangle (1.25,3.25);
	\draw[thick, fill=orange] (1.75,2.75) rectangle (3.25,3.25);
	\draw[thick, fill=orange] (3.75,2.75) rectangle (4.25,3.25);
	\draw[thick, fill=orange] (.75,-.25) rectangle (2.25,.25);
	\draw[thick, fill=orange] (2.75,-.25) rectangle (4.25,.25);
\end{tikzpicture}
\end{equation*}

Every strand must start at one platform at the top and end on another platform at the bottom. No strands that have originated from one platform may intersect. The strands themselves have no critical points with respect to the height function, no two strands ever intersect more than once, and there are no triple-intersections where three strands are concurrent. Diagrams are considered up to isotopy. Without any restrictions, there would be $n!$ such diagrams if $|\lambda|=n$, since there would be no limitations on the ordering of the strands. However, due to the above rules, there are only $4$ possible diagrams in the computation of $(h_{121},h_{22})$, shown below. 

\begin{center}
\qquad
\begin{tikzpicture}[scale=.5]
	% strands
	\draw[thick] (1,0) [out=90, in=-90] to (1,3);
	\draw[thick] (2,0) [out=90, in=-90] to (2,3);
	\draw[thick] (3,0) [out=90, in=-90] to (3,3);
	\draw[thick] (4,0) [out=90, in=-90] to (4,3);
	% platforms
	\filldraw[draw=black, fill=orange] (.75,2.75) rectangle (1.25,3.25);
	\filldraw[draw=black, fill=orange] (1.75,2.75) rectangle (3.25,3.25);
	\filldraw[draw=black, fill=orange] (3.75,2.75) rectangle (4.25,3.25);
	\filldraw[draw=black, fill=orange] (.75,-.25) rectangle (2.25,.25);
	\filldraw[draw=black, fill=orange] (2.75,-.25) rectangle (4.25,.25);
\end{tikzpicture}
\qquad
\begin{tikzpicture}[scale=.5]
	% strands
	\draw[thick] (1,0) [out=90, in=-90] to (1,3);
	\draw[thick] (2,0) [out=90, in=-90] to (4,3);
	\draw[thick] (3,0) [out=90, in=-90] to (2,3);
	\draw[thick] (4,0) [out=90, in=-90] to (3,3);
	% platforms
	\filldraw[draw=black, fill=orange] (.75,2.75) rectangle (1.25,3.25);
	\filldraw[draw=black, fill=orange] (1.75,2.75) rectangle (3.25,3.25);
	\filldraw[draw=black, fill=orange] (3.75,2.75) rectangle (4.25,3.25);
	\filldraw[draw=black, fill=orange] (.75,-.25) rectangle (2.25,.25);
	\filldraw[draw=black, fill=orange] (2.75,-.25) rectangle (4.25,.25);
\end{tikzpicture}
\qquad
\begin{tikzpicture}[scale=.5]
	% strands
	\draw[thick] (1,0) [out=90, in=-90] to (2,3);
	\draw[thick] (2,0) [out=90, in=-90] to (3,3);
	\draw[thick] (3,0) [out=90, in=-90] to (1,3);
	\draw[thick] (4,0) [out=90, in=-90] to (4,3);
	% platforms
	\filldraw[draw=black, fill=orange] (.75,2.75) rectangle (1.25,3.25);
	\filldraw[draw=black, fill=orange] (1.75,2.75) rectangle (3.25,3.25);
	\filldraw[draw=black, fill=orange] (3.75,2.75) rectangle (4.25,3.25);
	\filldraw[draw=black, fill=orange] (.75,-.25) rectangle (2.25,.25);
	\filldraw[draw=black, fill=orange] (2.75,-.25) rectangle (4.25,.25);
\end{tikzpicture}
\qquad
\begin{tikzpicture}[scale=.5]
	% strands
	\draw[thick] (1,0) [out=90, in=-90] to (2,3);
	\draw[thick] (2,0) [out=90, in=-90] to (4,3);
	\draw[thick] (3,0) [out=90, in=-90] to (1,3);
	\draw[thick] (4,0) [out=90, in=-90] to (3,3);
	% platforms
	\filldraw[draw=black, fill=orange] (.75,2.75) rectangle (1.25,3.25);
	\filldraw[draw=black, fill=orange] (1.75,2.75) rectangle (3.25,3.25);
	\filldraw[draw=black, fill=orange] (3.75,2.75) rectangle (4.25,3.25);
	\filldraw[draw=black, fill=orange] (.75,-.25) rectangle (2.25,.25);
	\filldraw[draw=black, fill=orange] (2.75,-.25) rectangle (4.25,.25);
\end{tikzpicture}
\end{center}

Define
\begin{equation}
(h_\lambda,h_\mu)=\sum_{\text{all diagrams D representing } (h_\lambda,h_\mu)}q^{\text{ number of of crossings in }D}.
\end{equation}
In the above example, $(h_{121},h_{22})=1+2q^2+q^3$.

We can extend the bilinear form to $\nsym^{q\otimes 2}$ by stating that any diagram in which strands from distinct tensor factors intersect contributes $0$ to the bilinear form: 
\begin{equation*}
(w\otimes x,y\otimes z)=(w,y)(x,z).
\end{equation*}

Let $I$ be the radical of the bilinear form in $\nsym^q$. In \cite{ek}, the authors prove for any $q$ that multiplication and comultiplication are adjoint. In other words, for all $x$,$y_1$, $y_2$ in $\nsym^q$, 
\begin{equation} \label{adjoint}
(y_1 \otimes y_2, \Delta(x))=(y_1y_2,x).
\end{equation}

\subsection{The Elementary $q$-Symmetric Functions}

We now use the bilinear form of $q$-symmetric functions to study one of their important bases: the elementary $q$-symmetric functions. 

Define elements $e_k\in\nsym^q$ by $e_k=0$ for $k<0$, $e_0=1$, and
\begin{equation}\label{eqn-defn-e}
\sum_{i=0}^k(-1)^iq^{\binom{i}{2}}e_ih_{k-i}=0\text{ for }k\geq1.
\end{equation}
Equivalently, let
\begin{equation}
e_n=q^{-\binom{n}{2}}\sum_{\alpha\vDash n}(-1)^{\ell(\alpha)-n}h_\alpha.
\end{equation}

\begin{lem}

\begin{align*}
&1. \text{ The coproduct of an elementary function is given by } \Delta(e_n)=\sum_{k=0}^n e_k\otimes e_{n-k}.\\
&2. \text{ If } \lambda\vDash n,\text{ then } (h_\lambda,e_n)=\begin{cases}1&\text{if }\lambda=(1,\ldots,1)\\0&\text{otherwise.}\end{cases}
\end{align*}
\end{lem}

\begin{proof} 

We begin by demonstrating (2), from which (1) will follow. To show (2), it suffices to show that 
\begin{displaymath}
    (h_m x,e_n)=
    \left\{
    \begin{array}{lr}
       (x,e_{n-1}) & \text{ if } m=1 \\
       0 & \text{otherwise.}
     \end{array}
     \right.
\end{displaymath}
We will utilize strong induction on $n$ in order to find $(h_m x, e_k h_{n-k})$. The base cases $n=0,1$ are easy to show. 
There are two cases to consider by the inductive hypothesis applied to $k<n$. Either there is a strand connecting $h_m$ and $e_k$, or there is not. Just as we used an orange platform to denote $h_n$, we will use a blue platform to denote $e_k$. The rules of the diagrammatic notation are the same for the blue platforms as they are for the orange platforms. 
\qquad
\begin{center}
\begin{tikzpicture}[scale=.58] 
    % strands
    \draw[thick] (2,0) [out=90, in=-90] to (4.5,4.5);
	\draw[thick] (5.52,-0.25) [out=90, in=-90] to (5.5,4.5);
	\draw[thick] (4.5,0) [out=90, in=-90] to (1.5,5);
	\node at (2,1.5) () {\footnotesize$k$};
	\node at (2.5,3.55) () {\footnotesize$m$};
	\node at (7.25,2) () {\footnotesize$n-k-m$};
	% platforms
	\filldraw[draw=black, fill=blue] (.75,-.25) rectangle (3.25,.25);
	\filldraw[draw=black, fill=orange] (3.75,-.25) rectangle (6.25,.25);
	\filldraw[draw=black, fill=orange] (.75,4.75) rectangle (2.25,5.25);
	% labels
	\node () at (2,-.75) {$k$};
	\node () at (5,-.75) {$n-k$};
	\node () at (1.5,5.75) {$m$};
	\node () at (5,5) {$*****$};
	\node () at (5,5.75) {$x$};
\end{tikzpicture}
\end{center}
If there is not a strand connecting $h_m$ and $e_k$, the configuration contributes $q^{km}(x,e_k h_{n-k-m})$ .

\qquad
\begin{center}
\begin{tikzpicture}[scale=.58]
    % strands
	\draw[thick] (1.3,0) [out=90, in=-90] to (1.25, 5);
	\draw[thick] (2.5,0) [out=90, in=-90] to (4.5,4.5);
	\draw[thick] (5.52,0) [out=90, in=-90] to (5.5,4.5);
	\draw[thick] (4.5,0) [out=90, in=-90] to (1.75,5);
	\node at (2.4,1.6) () {\tiny$k-1$};
	\node at (3,3.55) () {\tiny$m-1$};
	\node at (7.25,2) () {\tiny$n-k-m+1$};
	% platforms
	\filldraw[draw=black, fill=blue] (.75,-.25) rectangle (3.25,.25);
	\filldraw[draw=black, fill=orange] (3.75,-.25) rectangle (6.25,.25);
	\filldraw[draw=black, fill=orange] (.75,4.75) rectangle (2.25,5.25);
	% labels
	\node () at (2,-.75) {$k$};
	\node () at (5,-.75) {$n-k$};
	\node () at (1.5,5.75) {$m$};
	\node () at (5,5) {$*****$};
	\node () at (5,5.75) {$x$};
\end{tikzpicture}
\end{center}
If a stand connects $h_m$ and $e_k$, this configuration contributes $q^{(k-1)(m-1)}(x,e_{k-1}h_{n-k-m+1})$. 
We have thus shown that $(h_m x, e_k h_{n-k})=q^{km}(x,e_k h_{n-k-m})+q^{(k-1)(m-1)}(x,e_{k-1}h_{n-k-m+1})$.
Now we are equipped to consider $(h_m x, e_k)$.
\begin{align*}
(-1)^{n+1} q^{\binom{n}{2}}(h_m x, e_n)&=\sum\limits_{k=0}^{n-1}(-1)^k q^{\binom{k}{2}}(h_m x, e_k h_{n-k})\\
&=\sum\limits_{k=0}^{n-1}(-1)^k q^{\binom{k}{2}+km}(x,e_kh_{n-k-m})+\sum\limits_{k=0}^{n-1}(-1)^k q^{\binom{k}{2}+(m-1)(k-1)}(x,e_{k-1}h_{n-k-m+1})\\
&=\sum\limits_{k=0}^{n-1}(-1)^k q^{\binom{k}{2}+km}(x,e_kh_{n-k-m})+\sum\limits_{k=0}^{n-2}(-1)^{k+1} q^{\binom{k+1}{2}+(m-1)(k)}(x,e_{k}h_{n-k-m})\\
&=(-1)^{n-1}q^{\binom{n-1}{2}+nm}(x,e_{n-1}h_{1-m})
\end{align*}
Corresponding terms from the two sums cancel in pairs, since $q^{\binom{k}{2}+km}=q^{\binom{k+1}{2}+k(m-1)}$, leaving only the $k=n-1$ term in the first sum. The second statement of the lemma thus follows. 

We will now use (2) to prove (1). This follows from equation \ref{adjoint};  
\begin{equation*}
(\Delta(e_k),h_\lambda\otimes h_\mu)=(e_k,h_\lambda h_\mu)=\begin{cases}1
&\lambda=(1^\ell),\mu=(1^p),\ell+p=k,\\0&\text{otherwise.}\end{cases}
\end{equation*}
\end{proof}

We now calculate the sign incurred when strands connect two blue ($e_k$) platforms: 
\begin{align*}
(-1)^{n+1}q^{\binom{n}{2}}(e_n,e_n)&=\sum_{k=0}^{n-1}(-1)^kq^{\binom{k}{2}}(e_n,e_kh_{n-k})\\
&=(-1)^{n-1}q^{\binom{n-1}{2}}(e_n,e_{n-1}h_1)\\
&=(-1)^{n-1}q^{\binom{n-1}{2}}(\Delta(e_n),e_{n-1}\otimes h_1)\\
&=(-1)^{n-1}q^{\binom{n-1}{2}}\sum_{k=0}^{n}(e_k \otimes e_{n-k}, e_{n-1}\otimes h_1)\\
&=(-1)^{n-1}q^{\binom{n-1}{2}}(e_{n-1},e_{n-1}).
\end{align*}
One may solve this recursion to find that $(e_n, e_n)=q^{-\binom{n}{2}}$.Here, the second equality follows from noting that at most one strand can connect $h_{n-k}$ and $e_n$ (so that $k=n-1$), the third equality follows from adjointness, and the fourth and fifth equalities follow from the diagrammatic considerations of the previous lemma.  

To summarize the diagrammatics of the bilinear form thus developed:
\begin{enumerate}
\itemsep0em 
\item For each crossing, there is a factor of $q$ in the bilinear form. 
\item If two blue platforms are connected by $n$ strands, there is a factor of $q^{-\binom{n}{2}}$
\item At most one strand can connect a blue platform to an orange one. 
\end{enumerate}

\subsection{Relations Between Elementary $q$-Symmetric Polynomials}
In this subsection, we apply the diagrammatic method in order to study relations between $q$-elementary symmetric polynomials. 

Define $\text{Sym}^q\cong\nsym^q/R$, where $R$ is the radical of our bilinear form. 
\begin{lem}
If $q^n=1$, then $h_1^n$ is in the center of $\nsym^q$. 
\end{lem}
\begin{proof}
First, suppose $q$ is a primitive $n^{\text{th}}$ root of unity. Construct all ordered $k+1$-tuples of nonnegative integers that sum to $n-k$. Let $R_{k+1}^{n-k}$ be the set of all such $k+1$-tuples. For any tuple $(a_1,a_2, \cdots, a_{k+1})$, let $|(a_1,a_2, \cdots, a_{k+1})|$ be the sum of the entries of the tuple. 

For these tuples, $(a_1,a_2, \cdots, a_{k+1})$, define the map $f$ as follows: 
\begin{equation*}
f(a_1,a_2, \cdots, a_{k+1})=(ka_1,(k-1)a_2, (k-2)a_3,\cdots, a_k, 0).
\end{equation*}

Define 
\begin{equation*}
P(n,k)=\sum_{R_{k+1}^{n-k}} q^{|f(a_1,a_2, \cdots, a_{k+1})|}.
\end{equation*}

\begin{example}
\begin{equation*}
P(7,2)=1+q+2q^2+2q^3+3q^4+3q^5+3q^6+2q^7+2q^8+q^9+q^{10}
\end{equation*}
\end{example}

\begin{center}
\begin{tikzpicture}[scale=.7]
    % strands
	\draw[thick] (1,0) [out=90, in=-90] to (2,3);
	\draw[thick] (2,0) [out=90, in=-90] to (5,3);
	\draw[thick] (3,0) [out=90, in=-90] to (7,3);
	\draw[thick] (4,.5) [out=90, in=-90] to (1,3);
	\draw[thick] (5,.5) [out=90, in=-90] to (3,3);
	\draw[thick] (6,.5) [out=90, in=-90] to (4,3);
	\draw[thick] (7,.5) [out=90, in=-90] to (6,3);
	\draw[thick] (8,.5) [out=90, in=-90] to (8,3);
	\draw[thick] (8.25,.5) [out=90, in=-90] to (8.25,3);
	\draw[thick] (8.5,.5) [out=90, in=-90] to (8.5,3);
	\draw[thick] (8.75,.5) [out=90, in=-90] to (8.75,3);
	\node at (9.25,1.5) {\footnotesize$m$};
	\node at (5,0) {\ldots};
	\node at (6.5,0) {$x$};
	\node at (8,0) {\ldots};
	% platforms
	\filldraw[draw=black, fill=blue] (.75,-.25) rectangle (3.25,.25);
	\filldraw[draw=black, fill=orange] (.75,2.75) rectangle (1.25,3.25);
	\filldraw[draw=black, fill=orange] (1.75,2.75) rectangle (2.25,3.25);
	\filldraw[draw=black, fill=orange] (2.75,2.75) rectangle (3.25,3.25);
	\filldraw[draw=black, fill=orange] (3.75,2.75) rectangle (4.25,3.25);
	\filldraw[draw=black, fill=orange] (4.75,2.75) rectangle (5.25,3.25);
	\filldraw[draw=black, fill=orange] (5.75,2.75) rectangle (6.25,3.25);
	\filldraw[draw=black, fill=orange] (6.75,2.75) rectangle (7.25,3.25);
	\filldraw[draw=black, fill=orange] (7.75,2.75) rectangle (9,3.25);
\end{tikzpicture}
\end{center}

Consider the above diagram, representative of $(h_1^nh_m, e_kx)$. In the diagram, $n=7$ and $m=3$. The three strands from $e_3$ "split" the seven $h_1$'s into groups of $1$, $2$, $1$, and $0$. This is a $3+1$-tuple that sums to $7-3=n-k=4$. Numbering the $h_1$'s from left to right, note that the first $h_1$ contributes $q^k$ intersections, the third and fourth $h_1$'s contribute $q^{k-1}$ intersections, and so on. In general, the diagrams in which no strand connects $h_m$ and $e_k$ contribute $P(n,k)(h_1^{n-k}h_m,x)$ to $(h_1^nh_m,e_kx)$.

\begin{center}
\begin{tikzpicture}[scale=.7]
    % strands
	\draw[thick] (1,0) [out=90, in=-90] to (2,3);
	\draw[thick] (2,0) [out=90, in=-90] to (5,3);
	\draw[thick] (3,0) [out=90, in=-90] to (7,3);
	\draw[thick] (4,0) [out=90, in=-90] to (8,3);
	\draw[thick] (5,.5) [out=90, in=-90] to (1,3);
	\draw[thick] (6,.5) [out=90, in=-90] to (3,3);
	\draw[thick] (7,.5) [out=90, in=-90] to (4,3);
	\draw[thick] (8,.5) [out=90, in=-90] to (6,3);
	\draw[thick] (8.25,.5) [out=90, in=-90] to (8.25,3);
	\draw[thick] (8.5,.5) [out=90, in=-90] to (8.5,3);
	\draw[thick] (8.75,.5) [out=90, in=-90] to (8.75,3);
	\node at (9.75,1.5) {\footnotesize$m-1$};
	\node at (5.5,0) {\ldots};
	\node at (7,0) {$x$};
	\node at (8.5,0) {\ldots};
	% platforms
	\filldraw[draw=black, fill=blue] (.75,-.25) rectangle (4.25,.25);
	\filldraw[draw=black, fill=orange] (.75,2.75) rectangle (1.25,3.25);
	\filldraw[draw=black, fill=orange] (1.75,2.75) rectangle (2.25,3.25);
	\filldraw[draw=black, fill=orange] (2.75,2.75) rectangle (3.25,3.25);
	\filldraw[draw=black, fill=orange] (3.75,2.75) rectangle (4.25,3.25);
	\filldraw[draw=black, fill=orange] (4.75,2.75) rectangle (5.25,3.25);
	\filldraw[draw=black, fill=orange] (5.75,2.75) rectangle (6.25,3.25);
	\filldraw[draw=black, fill=orange] (6.75,2.75) rectangle (7.25,3.25);
	\filldraw[draw=black, fill=orange] (7.75,2.75) rectangle (9,3.25);
\end{tikzpicture}
\end{center}

If a strand connects $e_k$ to $h_m$, then it intersects the other $n-(k-1)$ strands connecting some $h_1$ to $x$, contributing a factor of $q^{n-k+1}$.The other intersections contribute $P(n,k-1)$. Putting this case and the previous case together, we obtain that

\begin{eqnarray}\label{Case 1}
(h_1^nh_m,e_kx)=P(n,k)(h_1^{n-k}h_m,x)+q^{n-k+1}P(n,k-1)(h_1^{n-k+1}h_{m-1},x).
\end{eqnarray}

\begin{center}
\begin{tikzpicture}[scale=.7]
    % strands
	\draw[thick] (1.25,0) [out=90, in=-90] to (4,3);
	\draw[thick] (2.25,0) [out=90, in=-90] to (6,3);
	\draw[thick] (3.25,.5) [out=90, in=-90] to (1.25,3);
	\draw[thick] (3.5,.5) [out=90, in=-90] to (1.5,3);
	\draw[thick] (3.75,.5) [out=90, in=-90] to (1.75,3);
	\draw[thick] (4,.5) [out=90, in=-90] to (2,3);
	\draw[thick] (5,.5) [out=90, in=-90] to (3,3);
	\draw[thick] (6,.5) [out=90, in=-90] to (5,3);
	\draw[thick] (7,.5) [out=90, in=-90] to (7,3);
	\draw[thick] (8,.5) [out=90, in=-90] to (8,3);
	\node at (1,2) {\footnotesize$m$};
	\node at (4,0) {\ldots};
	\node at (5.5,0) {$x$};
	\node at (7,0) {\ldots};
	% platforms
	\filldraw[draw=black, fill=blue] (1,-.25) rectangle (2.5,.25);
	\filldraw[draw=black, fill=orange] (1,2.75) rectangle (2.25,3.25);
	\filldraw[draw=black, fill=orange] (2.75,2.75) rectangle (3.25,3.25);
	\filldraw[draw=black, fill=orange] (3.75,2.75) rectangle (4.25,3.25);
	\filldraw[draw=black, fill=orange] (4.75,2.75) rectangle (5.25,3.25);
	\filldraw[draw=black, fill=orange] (5.75,2.75) rectangle (6.25,3.25);
	\filldraw[draw=black, fill=orange] (6.75,2.75) rectangle (7.25,3.25);
	\filldraw[draw=black, fill=orange] (7.75,2.75) rectangle (8.25,3.25);
\end{tikzpicture}

\begin{tikzpicture}[scale=.7]
	% strands
	\draw[thick] (1,0) [out=90, in=-90] to (1.25,3);
	\draw[thick] (2,0) [out=90, in=-90] to (5,3);
	\draw[thick] (3,0) [out=90, in=-90] to (6,3);
	\draw[thick] (3.75,.5) [out=90, in=-90] to (1.5,3);
	\draw[thick] (4,.5) [out=90, in=-90] to (1.75,3);
	\draw[thick] (4.25,.5) [out=90, in=-90] to (2,3);
	\draw[thick] (5,.5) [out=90, in=-90] to (3,3);
	\draw[thick] (6,.5) [out=90, in=-90] to (4,3);
	\draw[thick] (7,.5) [out=90, in=-90] to (7,3);
	\draw[thick] (8,.5) [out=90, in=-90] to (8,3);
	\node at (2,1.5) {\tiny$m-1$};
	\node at (5,0) {\ldots};
	\node at (6.25,0) {$x$};
	\node at (7.5,0) {\ldots};
	% platforms
	\filldraw[draw=black, fill=blue] (.75,-.25) rectangle (3.25,.25);
	\filldraw[draw=black, fill=orange] (1,2.75) rectangle (2.25,3.25);
	\filldraw[draw=black, fill=orange] (2.75,2.75) rectangle (3.25,3.25);
	\filldraw[draw=black, fill=orange] (3.75,2.75) rectangle (4.25,3.25);
	\filldraw[draw=black, fill=orange] (4.75,2.75) rectangle (5.25,3.25);
	\filldraw[draw=black, fill=orange] (5.75,2.75) rectangle (6.25,3.25);
	\filldraw[draw=black, fill=orange] (6.75,2.75) rectangle (7.25,3.25);
	\filldraw[draw=black, fill=orange] (7.75,2.75) rectangle (8.25,3.25);
\end{tikzpicture}
\end{center}

Similarly, the above two diagrams show that 
\begin{eqnarray}\label{Case 2}
(h_mh_1^n,e_kx)=q^{mk}P(n,k)(h_mh_1^{n-k},x)+q^{(m-1)(k-1)}P(n,k-1)(h_{m-1}h_1^{n-k+1},x).
\end{eqnarray}

Now, consider the case when $k=n+1$. In this case, there is only one diagram for the bilinear form, and it can be shown that
\begin{equation*}
%\begin{displaymath}
        \left\{
    \begin{array}{lr}
     (h_1^n h_m,e_{n+1}x)=(h_{m-1},x)&\\
     (h_m h_1^n,e_{n+1}x)=q^{n(m-1)}(h_{m-1},x),&\\
     \end{array}
     \right.
    %\end{displaymath}
\end{equation*}
which are equal since $q^n=1$. Now, if $k \leq n$, we claim that $P(n,k)=0$ for all $n \neq k$. This follows from the fact that $q^n=1$, that $q^{n-\ell)}\neq1$ for $\ell \in (1,2,3,\cdots,n-1)$, and the fact that 
\begin{equation*}
P(n,k)=\binom{n}{k}_q.
\end{equation*}

The above statement follows from a bijection establishing $P(n,k)$ as the Gaussian binomial coefficient $\binom{n}{k}_q$. It is known that the coefficient of $q^j$ in $\binom{n}{k}_q$ is the number of partitions of $j$ into $k$ or fewer parts, with each part less than or equal to $k$. $P(n,k)$ yields the same result since $f$ takes every $k+1$-tuple to a $k+1$-tuple with last term $0$. Each term must be less than or equal to $n-k$ since we have imposed that the sum of all the terms is $n-k$. 

We substitute $P(n,k)=0$ in \eqref{Case 1} and \eqref{Case 2} to find that both products $(h_1^n h_m,e_kx)$ and $(h_m h_1^n, e_kx)$ are $0$ unless $n=k$ or $n=k-1$ (already addressed). If $n=k$, then 
\begin{equation*}
%\begin{displaymath}
        \left\{
    \begin{array}{lr}
     (h_1^n h_m,e_{n}x)=(h_{m},x)+qP(n,n-1)(h_1h_{m-1},x)&\\
     (h_m h_1^n,e_{n}x)=q^{nm}(h_{m},x)+q^{(m-1)(n-1)}P(n,n-1)(h_{m-1}h_1,x).&\\
     \end{array}
     \right.
    %\end{displaymath}
\end{equation*}
Since $q^{mn}=1$ and $P(n,n-1)=0$, the above two expressions are equal. We therefore have the desired result when $q$ is a primitive root of unity. By using some basic number theory and the recursive property of the Gaussian polynomials that 
\begin{equation*}
\binom{n}{k}_ q=q^k\binom{n-1}{k}_q+\binom{n-1}{k-1}_q,
\end{equation*}
one may extend the result to any root of unity. 
\end{proof}

%====================================================================================================================
%EXPLICIT FORMULA SECTION

\subsection{Insertion}

In this subsection, we develop the novel idea of insertion as a method for developing further relations in $\nsym^q$. Note from the previous arguments in this section that many diagrammatic relations between elementary symmetric functions involve evaluating the bilinear form $(h_{\lambda}, e_k x)$, for some $\lambda$, $k$, and $x \in \nsym^q$. The insertion method aids in the general computation of this bilinear form. 

Let $\lambda$ and $\mu$ be compositions such that $\lambda=(\lambda_1, \lambda_2,\cdots, \lambda_z)$ and $\mu=(\mu_1, \mu_2,\cdots, \mu_z)$. The length of $\lambda$ and $\mu$, which will be denoted by $\ell(\lambda)$ and $\ell(\mu)$, is $z$. Define $|\lambda|=\lambda_1+\lambda_2+\cdots +\lambda_z$. Let $\sigma_\ell^k$ be a binary sequence of $0$'s and $1$'s with $k$ total elements, $\ell$ of which are $1$. Let $O_\ell^k$ be the set of all $\sigma_\ell^k$ for given $k$ and $\ell$. The size of the set $O_\ell^k$ is $\binom{k}{\ell}$. 

Define subtraction and multiplication of compositions in a component-wise manner
\begin{equation*}
\lambda-\mu=(\lambda_1-\mu_1, \lambda_2-\mu_2,\cdots, \lambda_z-\mu_z)
\end{equation*}
\begin{equation*}
\lambda\mu=(\lambda_1\mu_1, \lambda_2\mu_2,\cdots, \lambda_z\mu_z).
\end{equation*}

Let $T_n^m$ be the composition with $m$ elements, all of which are $n$. Let $\lambda_k^G=(\lambda_{k+1}, \lambda_{k+2},\cdots, \lambda_z)$ and let $\lambda_k^L=(\lambda_1, \lambda_2,\cdots, \lambda_k)$. Further, let $r(\lambda)$ denote the composition $(\lambda_1, \lambda_1+\lambda_2, \lambda_1+\lambda_2+\lambda_3,\cdots, \lambda_1+\cdots+\lambda_z)$. 
\newline Define $(h_\lambda, e_k x)_{h_\mu}$ to be the result when computing $(h_\lambda,e_ kx)$, but with $h_\mu$
appended to the beginning of $h_\alpha$ all bilinear forms $(h_\alpha,x)$. We call this process insertion. 

\begin{example}
\begin{equation*}
(h_2h_3,e_1x)_{h_1}=(h_1h_1h_3,x)+q^2(h_1h_2h_2,x) 
\end{equation*}
\end{example}

We now show some applications of insertion. The first is a result that simplifies the computation of a specific bilinear form. 
\begin{lem}
The equation $(h_nh_\lambda,e_kx)=q^{(k-1)(n-1)}(h_\lambda,e_{k-1}x)_{h_{n-1}}+q^{kn}(h_\lambda,e_kx)_{h_n}$ holds. 
\end{lem}
\begin{proof}
We utilize casework and the diagrammatic approach. There are two cases; either there exists a strand connecting $h_n$ and $e_k$, or there is not. 

\begin{equation*}
\begin{tikzpicture}[scale=.7]
    % strands
	\draw[thick] (1,0) [out=90, in=-90] to (2.25,3);
	\draw[thick] (2,0) [out=90, in=-90] to (3.5,3);
	\draw[thick] (2.25,0) [out=90, in=-90] to (3.75,3);
	\draw[thick] (3.25,0) [out=90, in=-90] to (5,3);
	\draw[thick] (4.25,0) [out=90, in=-90] to (6.25,3);
	\draw[thick] (4.5,0) [out=90, in=-90] to (6.5,3);
	\draw[thick] (5.5,.5) [out=90, in=-90] to (2.5,3);
	\draw[thick] (6.5,.5) [out=90, in=-90] to (4,3);
	\draw[thick] (7.5,.5) [out=90, in=-90] to (5.25,3);
	\draw[thick] (8.25,.5) [out=90, in=-90] to (6.75,3);
	\draw[thick] (8.5,.5) [out=90, in=-90] to (7,3);
	\draw[thick] (8.75,.5) [out=90, in=-90] to (7.25,3);
%	\node at (2.75,2.25) {\tiny$n-1$};
%	\node at (2.5,1.5) {\tiny$k-1$};
	\node at (6.25,0) {\ldots};
	\node at (7.25,0) {$x$};
	\node at (8.25,0) {\ldots};
	% platforms
	\filldraw[draw=black, fill=blue] (.75,-.25) rectangle (4.75,.25);
	\filldraw[draw=black, fill=orange] (2,2.75) rectangle (2.75,3.25);
	\filldraw[draw=black, fill=orange] (3.25,2.75) rectangle (4.25,3.25);
	\filldraw[draw=black, fill=orange] (4.75,2.75) rectangle (5.5,3.25);
	\filldraw[draw=black, fill=orange] (6,2.75) rectangle (7.5,3.25);
\end{tikzpicture}
\end{equation*}

If there exists a strand connecting $h_n$ to $e_k$, then summing across all possible diagrams, we obtain $(h_\lambda,e_{k-1}x)_{h_{n-1}}$. The insertion of $h_{n-1}$ is due to the fact that $n-1$ strands from $h_n$ intersect $x$, and must be accounted for when summing. However, each of the $n-1$ strands from the $h_n$ platform intersects each of the $k-1$ strands from $e_k$ to $h_\lambda$. This case contributes $q^{(k-1)(n-1)}(h_\lambda,e_{k-1}x)_{h_{n-1}}$.

\begin{equation*}
\begin{tikzpicture}[scale=.7]
    % strands
	\draw[thick] (1,0) [out=90, in=-90] to (3,3);
	\draw[thick] (1.25,0) [out=90, in=-90] to (3.25,3);
	\draw[thick] (2.25,0) [out=90, in=-90] to (4.5,3);
	\draw[thick] (3.25,0) [out=90, in=-90] to (5.75,3);
	\draw[thick] (3.5,0) [out=90, in=-90] to (6,3);
	\draw[thick] (4.5,.5) [out=90, in=-90] to (1.75,3);
	\draw[thick] (4.75,.5) [out=90, in=-90] to (2,3);
	\draw[thick] (5,.5) [out=90, in=-90] to (3.5,3);
	\draw[thick] (6,.5) [out=90, in=-90] to (4.75,3);
	\draw[thick] (7,.5) [out=90, in=-90] to (6.25,3);
	\draw[thick] (7.25,.5) [out=90, in=-90] to (6.5,3);
	\draw[thick] (7.5,.5) [out=90, in=-90] to (6.75,3);
	%\node at (1.5,2.25) {\footnotesize$n$};
	%\node at (1.25,1.5) {\footnotesize$k$};
	\node at (5,0) {\ldots};
	\node at (6,0) {$x$};
	\node at (7,0) {\ldots};
	% platforms
	\filldraw[draw=black, fill=blue] (.75,-.25) rectangle (3.75,.25);
	\filldraw[draw=black, fill=orange] (1.5,2.75) rectangle (2.25,3.25);
	\filldraw[draw=black, fill=orange] (2.75,2.75) rectangle (3.75,3.25);
	\filldraw[draw=black, fill=orange] (4.25,2.75) rectangle (5,3.25);
	\filldraw[draw=black, fill=orange] (5.5,2.75) rectangle (7,3.25);
\end{tikzpicture}
\end{equation*}

If no strand connects $h_n$ to $e_k$, then summing across all possible diagrams, we obtain $ (h_\lambda,e_{k}x)_{h_{n}}$. The insertion of $h_n$ is due to the fact that $n$ strands from $h_n$ intersect $x$, which must be accounted for in the summation. However, each of the $n$ strands from the $h_n$ platform intersects each of the $k$ strands from $e_k$ to $h_\lambda$, so this case contributes $q^{kn}(h_\lambda,e_kx)_{h_n}$. 

These are the only two possible cases and putting the two cases together yields the desired result. 
\end{proof}

Also note that 
\begin{displaymath}
    (h_n, e_k x)=
    \left\{
    \begin{array}{lr}
       (h_n,x) & \text{ if } k=0 \\
       (h_{n-1},x) & \text{ if } k=1 \\
       0 & \text{ if } k<0 \text{ or } k>1,
     \end{array}
     \right.
    \end{displaymath}
since at most one strand can connect $h_n$ and $e_k$. 
\end{proof}

We can now compute the general bilinear form $(h_{\lambda},e_k x)$, thereby facilitating the discovery of further relations between elementary symmetric functions. 

\begin{lem} We have that
\begin{equation*}
(h_\lambda, e_k x)=\sum\limits_{l=0}^{m}\sum\limits_{O_l^m}q^{|(\lambda_m^L-\sigma_l^m)(T_k^m-r(\sigma_l^m))|}(h_{\lambda_m^G},e_{k-l}x)_{h_{\lambda_m^L-\sigma_l^m}}.
\end{equation*}
\end{lem}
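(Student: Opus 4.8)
The plan is to prove the identity by induction on $m$, the number of platforms stripped off the front of $h_\lambda$, using the immediately preceding lemma (the one-platform recursion) as the workhorse at each step. The base case $m=0$ is immediate: the only binary string is the empty one, so $l=0$, the exponent $|(\,)|$ is $0$, nothing is inserted, and the right-hand side is just $(h_\lambda,e_kx)$. It is also worth noting at the outset that the case $m=1$ is literally the preceding lemma (after unifying its two terms), which is a good consistency check and shows that the general formula is precisely the iterate of that recursion.

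For the inductive step I would write $n=\lambda_1$ and let $\lambda^{\circ}=\lambda_1^G=(\lambda_2,\dots,\lambda_z)$, and first peel off $h_n$ via the preceding lemma. Indexing its two terms by $\sigma_1\in\{0,1\}$ (with $\sigma_1=1$ recording a strand from $h_n$ to $e_k$), the lemma reads
\begin{equation*}
(h_\lambda,e_kx)=\sum_{\sigma_1\in\{0,1\}}q^{(\lambda_1-\sigma_1)(k-\sigma_1)}\,(h_{\lambda^{\circ}},e_{k-\sigma_1}x)_{h_{\lambda_1-\sigma_1}}.
\end{equation*}
I would then apply the induction hypothesis with parameter $m-1$ to the composition $\lambda^{\circ}$ inside each summand, expanding the bare form $(h_{\lambda^{\circ}},e_{k-\sigma_1}x)$ over binary strings $\sigma'=(\sigma_2,\dots,\sigma_m)$ and letting the outer insertion ride along by linearity. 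The two summands for $\sigma_1$, together with the $2^{m-1}$ strings $\sigma'$, reassemble into a single sum over all $\sigma=(\sigma_1,\dots,\sigma_m)\in\{0,1\}^m$, which is exactly $\bigsqcup_{l=0}^m O_l^m$; the residual $e$-index is $e_{k-\sigma_1-|\sigma'|}=e_{k-l}$ with $l=|\sigma_l^m|$, and the residual top composition is $\lambda_m^G$, as required.

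The real content is the bookkeeping, and that is where I expect the only genuine difficulty. First, insertions must be shown to compose by concatenation in the correct order: applying the outer insertion $(\cdot)_{h_{\lambda_1-\sigma_1}}$ to a term already carrying an inner insertion $h_\nu$ prepends $h_{\lambda_1-\sigma_1}$, giving $h_{\lambda_1-\sigma_1}h_\nu$. This is immediate from the definition of insertion as prepending to each residual pairing $(h_\alpha,x)$, but it must be invoked carefully so that the outer (earlier-peeled) platform lands to the \emph{left}; concatenating $h_{\lambda_1-\sigma_1}$ with the inductively produced $h_{(\lambda_2-\sigma_2,\dots,\lambda_m-\sigma_m)}$ then yields precisely $h_{\lambda_m^L-\sigma_l^m}$. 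Second, the exponents must telescope: in the induction hypothesis the partial sums are measured against the shifted index $k-\sigma_1$, and the identity $(k-\sigma_1)-(\sigma_2+\cdots+\sigma_i)=k-(\sigma_1+\cdots+\sigma_i)=k-r_i$ guarantees that the contribution of part $i\ge 2$ is $(\lambda_i-\sigma_i)(k-r_i)$. Combined with the first-platform factor $q^{(\lambda_1-\sigma_1)(k-r_1)}$ (since $r_1=\sigma_1$), the exponents sum to $\sum_{i=1}^m(\lambda_i-\sigma_i)(k-r_i)=|(\lambda_m^L-\sigma_l^m)(T_k^m-r(\sigma_l^m))|$, matching the claim.

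The hard part is thus not any single computation but keeping the three moving pieces — the insertion order, the shifting $e$-index, and the partial-sum exponents — synchronized through the recursion. A purely diagrammatic alternative is available (classify each admissible diagram by the subset of the first $m$ platforms joined to the blue $e_k$ platform, read off the string $\sigma$, and tally all crossings directly), and it would produce the same formula; but it demands a single global crossing count, whereas the inductive route confines all the delicate counting to one invocation of the preceding lemma per step, which I expect to be cleaner to verify.
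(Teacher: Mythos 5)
Your proposal is correct and is essentially the paper's own argument: both prove the formula by induction on $m$ with the preceding one-platform recursion as the engine, insertions composing by left-concatenation, and the partial-sum exponents telescoping exactly as you describe. The only difference is organizational — you peel $h_{\lambda_1}$ first and apply the inductive hypothesis to the tail with shifted index $k-\sigma_1$, whereas the paper applies the hypothesis to the first $m$ platforms and then peels $h_{\lambda_{m+1}}$ inside the insertion (classifying binary strings by their last bit rather than their first) — a mirror image of the same induction.
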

\begin{proof}
We induct on $m$. If $m=1$, then the proposition becomes: 
\begin{eqnarray}
(h_\lambda, e_k x)=\sum\limits_{l=0}^{1}\sum\limits_{O_l^1}q^{|(\lambda_1-\sigma_l^1)(k-r(\sigma_l^1))|}(h_{\lambda_1^G},e_{k-l}x)_{h_{\lambda_1^L-\sigma_l^1}},
\end{eqnarray}
which reduces to Proposition 1.2.

Now assume that the result holds for $m$. Then,
\begin{align*}
(h_\lambda, e_k x)&=\sum\limits_{l=0}^{m}\sum\limits_{O_l^m}q^{|(\lambda_m^L-\sigma_l^m)(T_k^m-r(\sigma_l^m))|}(h_{\lambda_m^G},e_{k-l}x)_{h_{\lambda_m^L-\sigma_l^m}}\\
&=\sum\limits_{l=0}^{m}\sum\limits_{O_l^m}q^{|(\lambda_m^L-\sigma_l^m)(T_k^m-r(\sigma_l^m))|}q^{(k-l-1)(\lambda_{m+1}-1)}(h_{\lambda_{m+1}^G},e_{k-l-1}x)_{h_{\lambda_m^L-\sigma_l^m}h_{\lambda_{m+1}-1}}\\
&+\sum\limits_{l=0}^{m}\sum\limits_{O_l^m}q^{|(\lambda_m^L-\sigma_l^m)(T_k^m-r(\sigma_l^m))|}q^{(k-l)(\lambda_{m+1})}(h_{\lambda_{m+1}^G},e_{k-l}x)_{h_{\lambda_m^L-\sigma_l^m}h_{\lambda_{m+1}}}.\\
\end{align*}

We therefore have
\begin{eqnarray}\label{Recursive Insertion}
&(h_\lambda, e_k x)=\sum\limits_{l=1}^{m+1}\sum\limits_{O_l^m}q^{|(\lambda_m^L-\sigma_{l-1}^m)(T_k^m-r(\sigma_{l-1}^m))|}q^{(k-l)(\lambda_{m+1}-1)}(h_{\lambda_{m+1}^G},e_{k-l}x)_{h_{\lambda_m^L-\sigma_{l-1}^m}h_{\lambda_{m+1}-1}}\\
&+\sum\limits_{l=0}^{m}\sum\limits_{O_l^m}q^{|(\lambda_m^L-\sigma_l^m)(T_k^m-r(\sigma_l^m))|}q^{(k-l)(\lambda_{m+1})}(h_{\lambda_{m+1}^G},e_{k-l}x)_{h_{\lambda_m^L-\sigma_l^m}h_{\lambda_{m+1}}}.
\end{eqnarray}

Let $a_l^m$ denote a composition in $O_l^m$ that ends in a $0$. Let $b_l^m$ denote a composition in $O_l^m$ that ends in a $1$. Let $A_l^m$ and $B_l^m$ be the set of all $a_l^m$ and $b_l^m$, respectively. Now, consider the terms indexed only by $1 \leq l \leq m$: 
\begin{align*}
&\sum\limits_{l=1}^{m}\sum\limits_{B_l^{m+1}}q^{|(\lambda_{m+1}^L-b_{l}^{m+1})(T_k^{m+1}-r(b_{l}^{m+1}))|}(h_{\lambda_{m+1}^G},e_{k-l}x)_{h_{\lambda_{m+1}^L-b_{l}^{m+1}}}\\
+&\sum\limits_{l=1}^{m}\sum\limits_{A_l^{m+1}}q^{|(\lambda_{m+1}^L-a_{l}^{m+1})(T_k^{m+1}-r(a_{l}^{m+1}))|}(h_{\lambda_{m+1}^G},e_{k-l}x)_{h_{\lambda_{m+1}^L-a_{l}^{m+1}}}\\
=&\sum\limits_{l=0}^{m+1}\sum\limits_{O_l^{m+1}}q^{|(\lambda_{m+1}^L-\sigma_l^{m+1})(T_k^{m+1}-r(\sigma_l^{m+1}))|}(h_{\lambda_{m+1}^G},e_{k-l}x)_{h_{\lambda_{m+1}^L-\sigma_l^{m+1}}}.
\end{align*}
%where the last equality follows from 
%\begin{align*}
%       &A_l^{m+1} \cup B_l^{m+1}=O_l^{m+1}\\
%       &(\lambda_m^L-\sigma_l^m,\lambda_{m+1}-1)=(\lambda_{m+1}^L-b_l^{m+1})\\
%       &(\lambda_m^L-\sigma_l^m,\lambda_{m+1})=(\lambda_{m+1}^L-a_l^{m+1})\\
%       &(T_k^m-r(\sigma_l^m),k-l)=(T_k^m-r(a_l^{m+1}))\\
%       &(T_k^m-r(\sigma_{l-1}^m),k-l)=(T_k^m-r(b_l^{m+1})).\\
%\end{align*}
The terms indexed by $1 \leq l \leq m$ match their corresponding terms in the Proposition. It remains to consider the cases $l=0$ and $l=m$. For $l=0$, note that there does not exist a $b_0^m$, and for $l=m+1$, note that there does not exist an $a_m^m$. From here, it is easy to see that these terms satisfy the proposition as well (the $l=0$ term can be found in the second sum of \eqref{Recursive Insertion} and the $l=m+1$ term can be found in the first term of \eqref{Recursive Insertion}). 
\end{proof}

Therefore, an explicit formula for the bilinear form can be given by: 

\begin{equation*}
%\begin{displaymath}
    (h_\lambda,e_k x)=
        \left\{
    \begin{array}{lr}
       0 & \text{ if } k\geq z+1\\\\
       q^{|(\lambda_{k-1}^L-\sigma_{k-1}^{k-1})(T_k^{k-1}-r(\sigma_{k-1}^{k-1}))|}(h_{\lambda_{k-1}^L-\sigma_{k-1}^{k-1}}h_{\lambda_k-1}) & \text{ if } k=z\\\\
       \sum\limits_{O_{k-1}^{z-1}}q^{|(\lambda_{z-1}^L-\sigma_{k-1}^{z-1})(T_k^{z-1}-r(\sigma_{k-1}^{z-1}))|}(h_{\lambda_{z-1}^L-\sigma_{k-1}^{z-1}}h_{\lambda_z-1})\\
       +\sum\limits_{O_{k}^{z-1}}q^{|(\lambda_{z-1}^L-\sigma_{k}^{z-1})(T_k^{z-1}-r(\sigma_{k}^{z-1}))|}(h_{\lambda_{z-1}^L-\sigma_{k}^{z-1}}h_{\lambda_k}) & \text{ if } k<z.
       
     \end{array}
     \right.
    %\end{displaymath}
\end{equation*}

%================================================================================

\section{Conclusion and Further Research}
Through this work, we have contributed towards the program of oddification by studying properties of odd Dunkl operators in relation to diverse ideas in mathematics; namely, we connected odd Dunkl operators to odd divided difference operators, the classical Yang-Baxter equation, and the important Lie algebra $\sltwo$. We used inductive arguments and introduced refinements of the odd divided difference operators and the odd Dunkl operators in order to prove our main results. By discovering odd versions of the Dunkl Laplacian and $\sltwo$-triples, which play important roles in the representation theory of even symmetric polynomials, we have strengthened the odd theory and provided new areas of investigation for future researchers. 

In Section \ref{sl2}, we gave an action of $\sltwo$ on skew polynomials through a variant of the Khongsap-Wang odd Dunkl operator. In the future, we will try to describe the weight spaces and isotypic decomposition of this representation. We could also apply our results by studying higher degree differential operators in the odd case, since the representation theory of $\sltwo$ allows us to conveniently reduce degree to second order \cite{heckman}.

Ellis, one of the authors who introduced the odd nilHecke algebra, asked if there were odd analogs of other symmetric polynomials, such as Jack polynomials or Macdonald polynomials. Here, we outline a procedure for answering his question and making progress towards finding odd Jack polynomials. We first introduce the \emph{odd Cherednik operators} 
\begin{equation}
Y_i=-\alpha x_i\eta_i+\sum_{k<i} s_{i,k}-(n-1).
\end{equation}
Applying arguments similar to those used by Khongsap and Wang in \cite{kw}, we can find that 
\begin{center}
\begin{multicols}{2}
\begin{enumerate}
\item $Y_iY_j=Y_jY_i$
\item $s_iY_i=Y_{i+1}s_i-1$ 
\item $s_iY_{i+1}=Y_is_i+1$ 
\item $s_i Y_j=Y_js_i \text{ for } j\neq i,i+1$.
\end{enumerate}
\end{multicols}
\end{center}

The next step would be to find a scalar product for which the odd Cherednik operators are self-adjoint. One can then define the odd Jack polynomials as eigenfunctions of the odd Cherednik operators and study their properties as in \cite{knop}. Since the odd Cherednik operators are closely related to the odd Dunkl operators and the $r_{i,k}$ we introduced in Section \ref{odd dunkl}, the work in this paper would contribute significantly towards the study of odd Jack polynomials.

Factorization entails yet another problem of interest in the odd theory. For example, one can use the method of undetermined coefficients to show that, for odd $n$, 

\begin{equation*}
x_1^n-x_2^n=(x_1+ax_2)\sum_{k=0}^{n-1}v_ka^kx_1^{n-1-k}x_2^k,
\end{equation*}
where $v_{n-1}a^n=-1$, and $v_k$ is defined as follows: 
\begin{eqnarray*}
&v_k=\begin{cases}1&k\equiv 0,3 \text{ (mod }4) \\-1&k \equiv 1,2 \text{ (mod }4). \end{cases}
\end{eqnarray*}

Such identities arise in subtle ways in the action of operators on $P^-$ and the study of these kinds of noncommutative factorizations have separate combinatorial interests as well. 

In Section \ref{nh}, we introduced $q$-nilHecke algebras for all $q \neq 0,1,-1$. It would be interesting to study if the $q$-nilHecke algebras categorify an interesting Lie theoretic algebra, and whether they can be used to construct invariants of links or other geometric structures. One could also begin a diagrammatic study of the $q$-nilHecke algebras as in \cite{ekl}. 

In the same section, we defined elementary $q$-symmetric functions, which brings up the problem of finding relations between these generators and further studying the structure of $\sym^q_n$. We introduced a method for solving this problem using diagrams in Section \ref{diagrams}, and found some of these relations. However, the remaining relations between the $e_i$ are much more complex than their even or odd counterparts, and merit further study. When $q^3=1$, for example, the following degree $6$ relation holds:
\begin{align*}
&v_1 = e_{11211} + e_{12111} + e_{21111}\\
&v_2 = e_{1122} - 2e_{1221} + 3 e_{2112}+ e_{2211}\\
&v_3 = 2e_{1131}- 2e_{114}+ 2e_{1311}- 2e_{141} + 3 e_{222}+ 2e_{1113} - 2e_{411}\\
&v_1 + q^2v_2 + qv_3=0, 
\end{align*}
where $e_{\lambda}=e_{\lambda_1}\ldots e_{\lambda_k}$ for $\lambda=(\lambda_1, \lambda_2,\ldots,\lambda_k)$.

We also conjecture that our results, and especially the definition of odd Cherednik operators, have connections to recent geometric work of Braden, Licata, Proudfoot, and Webster, who have constructed category $\mathcal{O}$ for certain Cherednik algebras. As a result, we believe that the ideas in this paper will further develop the program of oddification and also create a more thorough understanding of higher representation theoretic structures.

\section{Acknowledgements}
I would like to thank my mentor, Alexander Ellis, for introducing me to his own research in representation theory and helping me obtain a more intuitive understanding of the odd construction. In addition, Professor Pavel Etingof, Professor Tanya Khovanova, and Dr. Ben Elias thoroughly edited this paper and provided encouragement. I also express gratitude to MIT PRIMES USA for giving me the opportunity to conduct this research.

\end{document}